\definecolor{svlinks}{rgb}{.0,0.3,0.6} 
\declaretheorem[numberwithin=section]{theorem}
\declaretheorem[numbered=no,name=Theorem]{theorem*}
\declaretheorem[sibling=theorem]{proposition}
\declaretheorem[numbered=no,name=Proposition]{proposition*}
\declaretheorem[numbered=no,name=Corollary]{corollary*}
\declaretheorem[sibling=theorem]{fact}
\declaretheorem[numbered=no,name=Fact]{fact*}
\declaretheorem[numbered=no,name=Folklore]{folklore*}
\declaretheorem[sibling=theorem]{observation}
\declaretheorem[numbered=no,name=Observation]{observation*}
\declaretheorem[numbered=no,name=Lemma]{lemma*}
\declaretheorem[sibling=theorem]{question}
\declaretheorem[numbered=no,name=Question]{question*}
\declaretheorem[numbered=no,name=Claim]{claim*}
\theoremstyle{definition}
\declaretheorem[numbered=no,name=Definition]{definition*}
\declaretheorem[numbered=no,name=Note]{note*}
\declaretheorem[numbered=no,name=Notation]{notation*}
\newcommand{\cc}{\mathfrak{c}}
\newcommand\ZFC{\mathrm{ZFC}}
\newcommand\MA{\mathrm{MA}}
\newcommand\CH{\mathrm{CH}}
\newcommand{\UUU}{{\mathcal{U}}}
\newcommand{\VVV}{{\mathcal{V}}}
\newcommand{\WWW}{{\mathcal{W}}}
\newcommand{\seq}[4]{\langle {#1}_{#2}: #2 #3 #4 \rangle}
\newcommand{\ord}[1]{\##1}
\newcommand{\pre}[2]{#1^{-1}(#2)}
\begin{document}

\title{Chains of P-points}
\keywords{Rudin-Keisler order, ultrafilter, P-point}
\subjclass[2010]{Primary 03E50, 03E05, 54D80.}
\author[Raghavan]{Dilip Raghavan}
\email{dilip.raghavan@protonmail.com}
\address{
Department of Mathematics\\
National University of Singapore\\
Singapore 119076\\
}
\urladdr{http://www.math.nus.edu.sg/~raghavan}
\thanks{The first author was partially supported by National University of Singapore
research grant number R-146-000-211-112.}
\author[Verner]{Jonathan L. Verner}
\email{jonathan.verner@ff.cuni.cz}
\urladdr{http://logic.ff.cuni.cz/people/verner}
\address{
Department of Logic\\
Faculty of Arts\\
Charles University\\
n\'am. Jana Palacha 2\\
116 38 Praha 1\\
}
\thanks{The second author was supported by the joint FWF-GA\v{C}R grant no. 17-33849L:
Filters, ultrafilters and connections with forcing, by the Progres grant Q14. Krize racionality a modern\'\i\ my\v{s}len\'\i\ and by the first author's grant number R-146-000-211-112 from the National University of Singapore}
\begin{abstract}
  It is proved that the Continuum Hypothesis implies that any sequence of rapid P-points of length $< {\cc}^{+}$ which is increasing with respect to the Rudin-Keisler ordering is bounded above by a rapid P-point.
  This is an improvement of a result from \cite{Raghavan:2016}.
  It is also proved that Jensen's diamond principle implies the existence of an unbounded strictly increasing sequence of P-points of length ${\omega}_{1}$ in the Rudin-Keisler ordering.
  This shows that restricting to the class of rapid P-points is essential for the first result.
\end{abstract}
\maketitle

\section{Introduction}

The Rudin-Keisler ordering on ultrafilters, introduced in the late sixties (\cite{Katetov:1968}; see also \cite{Rudin:1966} and \cite{Rudin:1971}),
turned out to be a very useful tool for studying properties of ultrafilters. A variant
of this ordering, the Rudin-Frol{\' i}k ordering, was used by Frol{\' i}k (\cite{Frolik:1967}) to prove,
in $\ZFC$, that the space of non-principal ultrafilters on \(\omega\) is non-homogeneous.
Many combinatorial properties can be characterized in terms of the ordering, e.g.
selective (or Ramsey) ultrafilters are precisely those which are minimal in the Rudin-Keisler
ordering, Q-points are those that are minimal in the Rudin-Blass ordering, P-points
are those below which the Rudin-Keisler and Rudin-Blass orderings coincide.

The first comprehensive study of the Rudin-Keisler (RK) order was done by A.~Blass in his
thesis \cite{Blass:1970}. A.~Blass continued his investigations by considering
the lower part of the ordering viz. the ordering of P-points \cite{Blass:1973}.
He showed that, under suitable assumptions, the ordering can be very rich. Assuming
Martin's Axiom (MA), he showed that

\begin{itemize}
 \item there are \(2^{\mathfrak c}\) many minimal P-points
 \item there are no maximal P-points
 \item the ordering of P-points is \(\sigma\)-closed, both downwards and upwards
 \item the real line as well as \(\omega_1\) can be embedded into the P-points.
\end{itemize}

These results were later extended by several authors (e.g. \cite{Rosen:1985},
\cite{Laflamme:1989}, \cite{Raghavan:2017}). The results that motivated the research that went into
this paper were obtained by B. Kuzeljevi\'c and D. Raghavan~\cite{Raghavan:2016}.
They showed
\begin{theorem*}[Kuzeljevi\'c and Raghavan] Assume $\MA$. The ordinal \(\mathfrak c^+\)
can be embedded into the ordering of (rapid) P-points.
\end{theorem*}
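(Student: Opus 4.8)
The plan is to construct, by transfinite recursion of length $\cc^{+}$, a strictly $<_{\mathrm{RK}}$-increasing sequence $\seq{\UUU}{\alpha}{<}{\cc^{+}}$ of rapid P-points, each living on a countable index set $I_{\alpha}$, together with a coherent family of projections $\pi^{\beta}_{\alpha}\colon I_{\beta}\to I_{\alpha}$ for $\alpha\le\beta$ witnessing the order. Here \emph{coherent} means $\pi^{\gamma}_{\alpha}=\pi^{\beta}_{\alpha}\circ\pi^{\gamma}_{\beta}$ together with $\pi^{\beta}_{\alpha}(\UUU_{\beta})=\UUU_{\alpha}$, so that $\UUU_{\alpha}\le_{\mathrm{RK}}\UUU_{\beta}$ holds automatically. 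I would maintain the bookkeeping invariant that strictness is witnessed only at successor steps: at stage $\alpha+1$ no member of $\UUU_{\alpha+1}$ is mapped injectively by $\pi^{\alpha+1}_{\alpha}$, so that $\UUU_{\alpha+1}\not\le_{\mathrm{RK}}\UUU_{\alpha}$. This has the pleasant consequence that strictness at limits comes for free: if $\alpha<\delta$ with $\delta$ a limit then $\alpha+1<\delta$, whence $\UUU_{\alpha}<_{\mathrm{RK}}\UUU_{\alpha+1}\le_{\mathrm{RK}}\UUU_{\delta}$ forces $\UUU_{\alpha}<_{\mathrm{RK}}\UUU_{\delta}$. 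Thus I never have to arrange strictness more than one step at a time.

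For the \emph{successor step} I take $I_{\alpha+1}=I_{\alpha}\times\omega$ with $\pi^{\alpha+1}_{\alpha}$ the first projection, and build a rapid P-point $\UUU_{\alpha+1}$ on $I_{\alpha+1}$ that projects to $\UUU_{\alpha}$ and is spread out over the fibres. This is the standard $\MA$ construction of a P-point run \emph{relative to} a projection: a length-$\cc$ recursion deciding each subset of $I_{\alpha+1}$, diagonalising each partition to secure the P-point property, and diagonalising each interval partition to secure rapidity, where each single requirement is realised by a $\sigma$-centered poset and $\MA$ is applied to the $<\cc$ dense sets accumulated so far. \emph{Limits $\delta$ of countable cofinality} are exactly the upward $\sigma$-closure of the P-point ordering already isolated by Blass under $\MA$: fixing a cofinal $\omega$-sequence $\alpha_{n}\nearrow\delta$ one builds $\UUU_{\delta}$ on the threads of the inverse system, extending the inverse-limit filter $\FFF=\{(\pi^{\delta}_{\alpha})^{-1}(A):\alpha<\delta,\ A\in\UUU_{\alpha}\}$ to a rapid P-point by the same relative construction.

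The genuinely new difficulty, and where I expect to spend almost all the effort, is a limit $\delta<\cc^{+}$ with $\omega<\cf(\delta)\le\cc$. The obstacle is that $\FFF$ may now have $\cc$ many generators, so the pseudo-intersection principle behind $\MA$---which caps at $<\cc$ dense sets---does not obviously apply when I try to diagonalise a partition while keeping $\FFF\subseteq\UUU_{\delta}$ (which is what enforces all $\cc$ projection conditions at once). The one fact I would try to exploit is that, \emph{because the cofinality is uncountable and every factor is a P-point}, $\FFF$ is itself a P-filter: any countable subfamily of $\FFF$ has its coordinates bounded by some $\alpha^{*}<\delta$, and a pseudo-intersection can then be manufactured downstairs using that $\UUU_{\alpha^{*}}$ is a P-point and pulled back along $\pi^{\delta}_{\alpha^{*}}$. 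The plan is to leverage this P-filter-ness to show that at each step of the length-$\cc$ recursion extending $\FFF$ to a rapid P-point only $<\cc$ of the projection constraints are actually active, so that the bespoke $\sigma$-centered poset adding the required pseudo-intersection meets only $<\cc$ dense sets and $\MA$ applies after all. Making this reduction precise---identifying the correct $\sigma$-centered poset and proving that coherence collapses the $\cc$-many generators of $\FFF$ to $<\cc$-many relevant dense sets at each stage, all while preserving rapidity---is the heart of the argument and the step I expect to be hard.
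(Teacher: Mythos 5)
There is a genuine gap, and it sits exactly where you predicted the difficulty would be --- but it is worse than a hard step: your framework is provably unworkable already at stage $\omega_1$, which every construction of length $\cc^+$ must pass. The culprit is your insistence on \emph{literal} coherence, $\pi^\gamma_\alpha=\pi^\beta_\alpha\circ\pi^\gamma_\beta$, with limit ultrafilters living on threads of the inverse system. Suppose $\delta$ is a limit of uncountable cofinality and you had a countable index set $I_\delta$ with literally commuting projections $\pi^\delta_\alpha$ pushing some ultrafilter ${\UUU}_\delta$ onto each ${\UUU}_\alpha$ (no P-point or rapidity assumptions needed). Call $x,x'\in I_\delta$ equivalent if $\pi^\delta_\alpha(x)=\pi^\delta_\alpha(x')$ for all $\alpha<\delta$. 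Literal commutation makes agreement at a level propagate downward, so any two inequivalent points disagree at \emph{every} level above some $\gamma_{x,x'}<\delta$. Since $I_\delta$ is countable and $\cf(\delta)>\omega$, the ordinal $\gamma^*=\sup\{\gamma_{x,x'}: x\not\approx x'\}$ is below $\delta$, and for every $\beta\geq\gamma^*$ the fibers of $\pi^\delta_\beta$ are exactly the equivalence classes (if all points are equivalent, the projections are constant and the pushforwards principal, which is absurd). Consequently, for $\gamma^*\leq\beta<\beta'<\delta$ the connecting map $\pi^{\beta'}_\beta$ is injective on $\ran(\pi^\delta_{\beta'})$, and $\ran(\pi^\delta_{\beta'})\in{\UUU}_{\beta'}$ because its preimage is all of $I_\delta$. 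A map witnessing ${\leq}_{RK}$ that is injective on a set of the ultrafilter witnesses RK-equivalence, so ${\UUU}_\beta$ and ${\UUU}_{\beta'}$ are RK-equivalent for all $\gamma^*\leq\beta<\beta'<\delta$, contradicting strict increase of the chain you have already built below $\delta$. So no admissible ${\UUU}_{\omega_1}$ exists at all in your setup; MA cannot help. This is precisely why the paper's Section 3 (condition (3) of its main proposition) and the Kuzeljevi\'c--Raghavan construction it cites only require the witnessing maps to commute \emph{on a set of the relevant ultrafilter}: exact commutation is incompatible with strictly increasing chains at limits of uncountable cofinality.

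Even if you repair the framework by weakening coherence to commutation modulo large sets, what remains of your sketch at limits of uncountable cofinality is an unproven hope, and that hope is essentially the entire content of the theorem. Your observation that the pulled-back filter is a P-filter (countably many generators have coordinates bounded below $\delta$) is correct but only controls countable subfamilies; at a limit of cofinality $\cc$ there genuinely are $\cc$-many constraints to respect --- every predecessor must remain a predecessor --- and MA's cap of $<\cc$ dense sets cannot absorb them by a stage-by-stage reduction, which is what your plan calls for. The known proof in \cite{Raghavan:2016} does not perform such a reduction at the limit; it imposes an invariant (the $\delta$-generic sequence condition) on the \emph{whole} recursion from the start, so that upon arrival at a limit of cofinality up to $\cc$ the sequence already carries the genericity needed to produce an upper bound. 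Nothing of that sort can be recovered post hoc from a chain built as in your successor and countable-cofinality steps, so even the repaired proposal leaves the crucial case unproved.
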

Since any ultrafilter has at most \(\mathfrak c\)-many RK-predecessors, the
above is the best possible result as far as embedding of ordinals is concerned.
The authors of \cite{Raghavan:2016} used the notion of a \(\delta\)-generic
sequence of P-points (see \cite{Raghavan:2016}) which allowed them to carry through
an inductive construction of length \(\mathfrak c^+\).

In this paper we improve upon their results as follows.
In the first part of the
paper (\autoref{thm:closed}) we show that, assuming the Continuum Hypothesis (CH), the ordering of \emph{rapid} P-points is, in fact,
\(\mathfrak c^+\)-closed:
\begin{theorem*} Assume CH. Any increasing sequence of rapid P-points of length
\(<\mathfrak c^+\) is bounded above by a rapid P-point.
\end{theorem*}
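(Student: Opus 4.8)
The plan is to build the bounding ultrafilter $\VVV$ directly on $\omega$, as a rapid P-point, together with a coherent family of projections witnessing $\UUU_\alpha \le_{\mathrm{RK}} \VVV$ for every $\alpha$. First I would dispose of the easy cofinalities. Since CH gives $\cc^{+}=\omega_2$, the sequence $\langle \UUU_\alpha : \alpha<\delta\rangle$ has length $\delta\le\omega_1$. If $\delta$ is a successor its top element is already an upper bound, and if $\cf(\delta)=\omega$ I may pass to a cofinal $\omega$-subsequence, since any bound for a cofinal subsequence bounds the whole chain. Thus it suffices to treat $\delta=\omega$ and $\delta=\omega_1$. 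In either case I first replace the given projections by a genuinely coherent system: choose $f_{\beta\alpha}\colon\omega\to\omega$ with $f_{\beta\alpha}(\UUU_\beta)=\UUU_\alpha$ and, after shrinking to suitable sets in the ultrafilters, arrange $f_{\alpha\alpha}=\mathrm{id}$ and $f_{\gamma\alpha}=f_{\beta\alpha}\circ f_{\gamma\beta}$ for $\alpha\le\beta\le\gamma$.

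Next I fix the target. I want $\VVV$ on $\omega$ and projections $\pi_\alpha\colon\omega\to\omega$ ($\alpha<\delta$) with $\pi_\alpha(\VVV)=\UUU_\alpha$ and $f_{\beta\alpha}\circ\pi_\beta=\pi_\alpha$ on a set in $\VVV$. Using CH, fix an enumeration of length $\omega_1$ of all tasks: every $A\in\cube$ to be decided into or out of $\VVV$; every $g\in\BS$, for which a set in $\VVV$ on which $g$ is constant or finite-to-one must be produced (to make $\VVV$ a P-point); every $h\in\BS$, for which a set in $\VVV$ whose increasing enumeration dominates $h$ must be produced (rapidity); and every ordinal $\alpha<\delta$, at which $\pi_\alpha$ is to be defined. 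I build $\VVV$ by a recursion of length $\omega_1$ maintaining a countable approximation: a $\subseteq^{*}$-decreasing tower $\langle X_\xi\rangle$ whose generated filter will be $\VVV$, together with the projections defined so far, subject to the invariant that for every already-active $\alpha$ the image $\pi_\alpha[X_\xi]$ lies in $\UUU_\alpha$ and coherence with the $f_{\beta\alpha}$ holds on $X_\xi$.

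The successor steps are routine given the invariant. At a step deciding $A$, I shrink $X_\xi$ into $A$ or its complement, whichever keeps all the finitely many relevant images in the respective $\UUU_\alpha$; this is possible because each $\pi_\alpha[X_\xi]\in\UUU_\alpha$ and $\UUU_\alpha$ is an ultrafilter. At a P-point step I use that each $\UUU_\alpha$ is a P-point to pull a pseudo-intersection back through the finitely many active projections and thin $X_\xi$ accordingly, forcing $g$ to become constant or finite-to-one. At a rapidity step I thin $X_\xi$ to a fast-growing subset while preserving the largeness of its images, invoking the rapidity of the $\UUU_\alpha$ together with the coherence. At an activation step for $\alpha$ I define $\pi_\alpha$ on a set in $X_\xi$ so as to cohere with the already-defined projections below $\alpha$, using the $f_{\beta\alpha}$.

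The main obstacle is the limit activation step at $\lambda$ with $\cf(\lambda)=\omega_1$, which occurs precisely in the case $\delta=\omega_1$: the countable approximation reached at that stage has activated only countably many $\pi_\alpha$, yet $\pi_\lambda$ must cohere with a cofinal, hence uncountable, family below $\lambda$, and the limit filter must remain a P-point and rapid. I would resolve this by making the tower sufficiently generic as it is built — a fusion/diagonalization ensuring that the coherence constraints imposed by the cofinally many $\UUU_\alpha$ ($\alpha<\lambda$) are compatible, so that a single pseudo-intersection in the P-point $\UUU_\lambda$ can be pulled back to define $\pi_\lambda$ coherently. This is exactly the role played by the $\delta$-generic sequences of \cite{Raghavan:2016}, and it is where rapidity is essential: the diamond-based example in the second half of the paper shows that coherence cannot be maintained for arbitrary P-points, so the fast growth of the generating sets is what keeps the system consistent at uncountable-cofinality limits. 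Finally I set $\VVV=\{A\subseteq\omega : \exists\xi\; X_\xi\subseteq^{*}A\}$; the tasks guarantee $\VVV$ is a rapid P-point, and the $\pi_\alpha$ witness $\UUU_\alpha\le_{\mathrm{RK}}\VVV$ for all $\alpha<\delta$.
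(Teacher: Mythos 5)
Your skeleton is the same as the paper's: under CH one runs a recursion of length $\omega_1$ that simultaneously builds a $\subseteq^*$-decreasing tower generating $\VVV$ and finite-to-one projections witnessing ${\UUU}_{\alpha} \: {\leq}_{\mathrm{RK}} \: \VVV$, with bookkeeping tasks for deciding sets, P-pointness, and rapidity. But there is a genuine gap exactly where the paper does all of its work. First, the difficulty is mislocated. In a recursion of length $\omega_1$ in which $\pi_\alpha$ is activated at a countable stage, every limit ordinal $\lambda<\omega_1$ has cofinality $\omega$, so the ``limit activation step at $\lambda$ with $\cf(\lambda)=\omega_1$'' never occurs. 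The real obstacle is the countable-limit activation: one must produce a single finite-to-one $\pi_\lambda$ which commutes, modulo sets in the relevant ultrafilters, with the countably many projections $\pi_\beta$ ($\beta<\lambda$) already built, sends the current tower to sets in $\mathcal U_\lambda$, and leaves the tower sparse enough that rapidity of $\VVV$ can still be arranged later. This is an inverse-limit problem: composing with the maps $f_{\lambda\beta}$ only pushes projections \emph{downward} from a larger index, so the $f$'s cannot produce $\pi_\lambda$. Your successor steps are declared routine without establishing any inductive hypotheses that would make this limit step possible, so the construction as described gets stuck at the first limit activation. (Relatedly, your P-point and decision steps speak of ``finitely many'' active projections, but from stage $\omega$ onward there are countably many, and preserving $\pi_\alpha[X_\xi]\in\mathcal U_\alpha$ for all of them simultaneously is the same limit problem again.)

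Second, the mechanism you invoke to repair this --- making the tower ``sufficiently generic'' in the style of the $\delta$-generic sequences of \cite{Raghavan:2016} --- cannot work here, because $\delta$-genericity in that paper is a property of the \emph{chain of ultrafilters}, which there is constructed together with its witnessing maps; in the present theorem the chain is given and arbitrary, and you only get to choose the tower and the projections. The whole point of the paper's argument is to replace genericity of the chain by quantitative consequences of rapidity, carried as invariants along the recursion: each $g_\beta$ is built so that its fibers have size dominating $s_\beta\circ\pi_{\beta 0}$, where the $s_\beta$ form a decreasing family all dominating $Poly$ (conditions (1)--(2) of the Proposition in Section~\ref{sec:noshort}), and the tower is coupled to the fibers by requiring $|g_\beta^{-1}(n)\cap T_\beta|$ to tend to infinity along a set in $\mathcal U_\beta$ while staying below $\min\big(g_\beta^{-1}(n)\cup\{\pi_{\beta 0}(n)\}\big)$ (condition (4)). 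At a limit, these invariants together with the rapidity of the new top ultrafilter (used via \autoref{fact:alt-rapid} and \autoref{rapid-growth}) are what allow the old fibers to be partitioned into enough new fibers to define the next projection while keeping the tower thin. None of this, nor any substitute for it, appears in your proposal, so in effect you have assumed the paper's Proposition rather than proved it. Your closing claim that the diamond example shows coherence fails ``at uncountable-cofinality limits'' also misreads that construction: it produces a non-rapid $\omega_1$-chain with no P-point above it at all, i.e.\ an obstruction to the final bound, not to limit stages of a recursion.
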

Unlike many earlier results, this theorem is more than just an embedding result, for it provides new information about the global structure of the class of rapid P-points under the Rudin-Keisler ordering.

We also show in Section \ref{sec:short} (\autoref{thm:unbounded}) that the fact that we are looking at \emph{rapid} P-points is crucial.
Assuming $\diamondsuit$ (though we suspect that CH is enough), we construct an increasing sequence of P-points of length \(\omega_1\) without any P-point upper bound.

The chains of P-points of length ${\cc}^{+}$ constructed in \cite{Raghavan:2016} enjoy a slightly stronger property than the long chains that can be built using the technique from Section \ref{sec:noshort} of this paper.
The chains of \cite{Raghavan:2016} are all increasing in the ${\leq}^{+}_{\textrm{RB}}$ ordering, but our technique is insufficient to ensure this property for any of the chains of length ${\cc}^{+}$ here.
Thus the existence statement proved in \cite{Raghavan:2016} is stronger than the existence result that is derivable from the work in Section \ref{sec:noshort}.

We should also comment on our assumptions. Since S.~Shelah (\cite{Wimmers:1982})
showed that P-points need not exist at all, or there might be, e.g., just one (see Chapter VI of \cite{Shelah:1998}),
some assumption which guarantees that the structure is rich is needed. For
simplicity we use CH, though a weaker assumption, e.g.\@ MA, would be sufficient for our results.
\section{Preliminaries}
In this section we introduce the basic notions and state some standard facts.

\begin{definition*}[\cite{Rudin:1956}] An ultrafilter \(\mathcal U\) on \(\omega\) is a \emph{P-point}
provided that for any sequence \(\langle X_n:n<\omega\rangle\)
of elements of \(\mathcal U\) there is an \(X\in\mathcal U\) such that
\(|X\setminus X_n|<\omega\) for each \(n\). The last condition will also be
denoted by \(X\subseteq^* X_n\).
\end{definition*}

The following is an alternate characterization which we will often use:

\begin{folklore*} An ultrafilter \(\mathcal U\) is a P-point iff every function
\(f:\omega\to\omega\) is either constant or finite-to-one on some set in
\(\mathcal U\).
\end{folklore*}

\begin{definition*} Given a family \(\mathcal P\) of functions from \(\omega\) to
\(\omega\) we say that a function \(f:\omega\to\omega\) \emph{dominates} \(\mathcal P\)
if \(g\leq^*f\) for each \(g\in\mathcal P\), where
\[
  g\leq^* f\iff (\forall^\infty n)(g(n)\leq f(n)),
\]
and where \(\forall^\infty n\) is a shortcut for ``for all but finitely many n''.
\end{definition*}

\begin{definition*}[\cite{Mokobodzki:1967}] An ultrafilter \(\mathcal U\) on \(\omega\) is \emph{rapid}
if for every \(f:\omega\to\omega\) there is an \(X\in\mathcal U\) such that the function enumerating $X$ in increasing order dominates \(\left\{f\right\}\).
To make notation simpler we will
write \(X(n)\) to denote the \(n\)-th element of \(X\) in its increasing enumeration and \(X[n]=X\setminus X(n)\).
\end{definition*}

Again we will use an alternate characterization:

\begin{fact} \label{fact:alt-rapid}
An ultrafilter \(\mathcal U\) is rapid iff for every partition \(\{K_n:n<\omega\}\)
of \(\omega\) into finite sets there is \(X\in\mathcal U\) such that \(|X\cap K_n|\leq n\)
for all \(n<\omega\) iff for every infinite \(A\subseteq\omega\) there is \(X\in\mathcal U\) such that
\(|X\cap n|\leq|A\cap n|^2\) for all \(n<\omega\).
\end{fact}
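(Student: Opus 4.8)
The plan is to prove the two stated equivalences as a cycle of implications $(1)\Rightarrow(2)\Rightarrow(3)\Rightarrow(1)$, where $(1)$ is rapidity, $(2)$ is the partition condition, and $(3)$ is the square condition. It is convenient to first record a reformulation of rapidity. Since $\mathcal{U}$ is non-principal it is closed under removing finite sets, and replacing an arbitrary $f$ by a strictly increasing majorant does no harm; so $(1)$ is equivalent to the statement that \emph{for every strictly increasing $f\colon\omega\to\omega$ there is $X\in\mathcal{U}$ with $X(n)\ge f(n)$ for every $n$}, equivalently with $|X\cap f(n)|\le n$ for every $n$. The passage from ``$X(\cdot)$ dominates $\{f\}$'' (domination only eventually) to ``$X(n)\ge f(n)$ for all $n$'' is the routine finite fix-up obtained by discarding an initial segment of $X$ and using that $f$ is increasing. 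I will use this ``all $n$'' form freely.

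For $(1)\Rightarrow(2)$, let $\{K_{n}:n<\omega\}$ be a partition of $\omega$ into finite sets and set $f(n)=1+\max\bigl(K_{0}\cup\cdots\cup K_{n}\bigr)$. This $f$ is strictly increasing and satisfies $f(n)>\max K_{n}$, so the reformulation of rapidity yields $X\in\mathcal{U}$ with $X(n)\ge f(n)>\max K_{n}$ for all $n$. Hence at most $n$ elements of $X$ are $\le\max K_{n}$, and since $K_{n}\subseteq[0,\max K_{n}]$ this forces $|X\cap K_{n}|\le n$ for every $n$.

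For $(2)\Rightarrow(3)$, let $A=\{a_{0}<a_{1}<\cdots\}$ be infinite and apply $(2)$ to the interval partition $K_{0}=[0,a_{0})$ and $K_{n}=[a_{n-1},a_{n})$ for $n\ge 1$, obtaining $X\in\mathcal{U}$ with $|X\cap K_{n}|\le n$. Fixing $n$ and putting $k=|A\cap n|$, one has $X\cap n\subseteq K_{0}\cup\cdots\cup K_{k}$, so $|X\cap n|\le\sum_{i=0}^{k} i=\binom{k+1}{2}\le k^{2}=|A\cap n|^{2}$, the inequality $\binom{k+1}{2}\le k^{2}$ holding for all $k\ge 0$. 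For $(3)\Rightarrow(1)$, given a strictly increasing $f$, choose $A$ sparse enough that $|A\cap f(m)|\le\sqrt{m}$ for all large $m$ (for instance let the $k$-th element of $A$ be $f(4k^{2})$, so that $a_{k}<f(m)$ forces $k<\tfrac12\sqrt{m}$); then $(3)$ supplies $X\in\mathcal{U}$ with $|X\cap f(m)|\le|A\cap f(m)|^{2}\le m$ for all large $m$, whence $X(m)\ge f(m)$ eventually, i.e.\ the increasing enumeration of $X$ dominates $\{f\}$.

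I expect the only genuine subtlety to be the step $(1)\Rightarrow(2)$: because the pieces $K_{n}$ are arbitrary finite sets rather than intervals, a low-index piece may contain arbitrarily large integers, so one cannot control $|X\cap K_{n}|$ by merely making $X$ sparse on an initial segment. The point that makes it work is that the single monotone function $f(n)=1+\max(K_{0}\cup\cdots\cup K_{n})$ simultaneously dominates the maxima of all pieces of index $\le n$, so the enumeration bound $X(n)>\max K_{n}$ caps the total number of elements of $X$ below $\max K_{n}$ and hence caps $|X\cap K_{n}|$. The remaining care is purely bookkeeping: the eventual-versus-everywhere adjustment in the reformulation, and the elementary inequalities $\binom{k+1}{2}\le k^{2}$ and $|A\cap f(m)|^{2}\le m$ that calibrate the exponent $2$ appearing in $(3)$.
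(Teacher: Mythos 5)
Your proof is correct. Note that the paper states this Fact without proof, treating it as folklore, so there is no argument in the paper to compare against; your cyclic scheme $(1)\Rightarrow(2)\Rightarrow(3)\Rightarrow(1)$ is an efficient way to supply the missing details, and the three key devices --- the function $f(n)=1+\max(K_0\cup\cdots\cup K_n)$ that tames arbitrary finite pieces, the interval partition induced by $A$, and the sparse set $a_k=f(4k^2)$ that calibrates the exponent $2$ --- are all sound, as are the bookkeeping inequalities. One small slip: in $(1)\Rightarrow(2)$ the function $f(n)=1+\max(K_0\cup\cdots\cup K_n)$ is only non-decreasing, not strictly increasing (a later piece may have a smaller maximum than all earlier ones). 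This is harmless: either observe that your reformulation of rapidity holds verbatim for non-decreasing $f$, since the discard-an-initial-segment argument uses only monotonicity, or replace $f$ by the strictly increasing majorant $g(n)=f(n)+n$, which still satisfies $g(n)>\max K_n$. A second, purely conventional point: if one requires partitions to have nonempty pieces, then the piece $K_0=[0,a_0)$ in $(2)\Rightarrow(3)$ is empty when $a_0=0$; a shift of indices (apply $(2)$ to $L_n=K_{n+1}$, noting $|X\cap K_{n+1}|\leq n\leq n+1$ and $|X\cap K_0|=0$) absorbs this. Neither remark affects the substance of the argument.
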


\begin{definition*}[\cite{Katetov:1968}]
The \emph{Rudin-Keisler ordering} of ultrafilters is defined as follows. Given two
ultrafilters \(\mathcal U,\mathcal V\) on \(\omega\) we say that \(\mathcal U\)
is \emph{Rudin-Keisler below} (or that it is \emph{Rudin-Keisler reducible} to)
\(\mathcal V\), denoted \(\mathcal U\leq_{RK}\mathcal V\), if there is a function
\(f:\omega\to\omega\) such that
\[
  \mathcal U = f_*(\mathcal V) = \left\{X\subseteq\omega:f^{-1}[X]\in\mathcal V \right\}.
\]
If the function is finite-to-one we say that \(\mathcal U\) is \emph{Rudin-Blass}
below \(\mathcal V\), \(\mathcal U\leq_{RB}\mathcal V\). If the function is both
finite-to-one \emph{and nondecreasing} we write \(\mathcal U\leq_{RB}^+\mathcal V\).
\end{definition*}
More information about the ${\leq}^{+}_{\mathrm{RB}}$ ordering on the ultrafilters can be found in \cite{Laflamme:1998}.
A major difference between the ${\leq}_{\mathrm{RB}}$ and ${\leq}^{+}_{\mathrm{RB}}$ orderings, which was discovered by Laflamme and Zhu in \cite{Laflamme:1998}, is that ${\leq}^{+}_{\mathrm{RB}}$ is a tree-like ordering.
In other words, for any ultrafilters $\UUU$, $\VVV$, and $\WWW$, if $\UUU \: {\leq}^{+}_{\mathrm{RB}} \: \WWW$ and $\VVV \: {\leq}^{+}_{\mathrm{RB}} \: \WWW$, then either $\UUU \: {\leq}^{+}_{\mathrm{RB}} \: \VVV$ or $\VVV \: {\leq}^{+}_{\mathrm{RB}} \: \UUU$.
This is very much false for the ${\leq}_{\mathrm{RB}}$ ordering even when it is restricted to the class of P-points, as was shown by Blass~\cite{Blass:1973} who constructed a P-point with two incomparable predecessors assuming $\MA$.

It is easy to see that being rapid and being a P-point is preserved when going
down in the Rudin-Keisler ordering and that the Rudin-Keisler and Rudin-Blass orderings
coincide below every P-point. Also, since Rudin-Keisler reducibility has
to be witnessed by some function \(f:\omega\to\omega\) and since two RK-inequivalent
ultrafilters can't be witnessed to be below a third by a single function it immediately
follows that every ultrafilter has at most \(\mathfrak c\)-many RK-predecessors.

Another ordering of ultrafilters is the Tukey ordering. It was introduced by
Tukey in \cite{Tukey:1940} for comparing the cofinal type of arbitrary directed partial orders. Isbell (\cite{Isbell:1965})
was the first who used the Tukey ordering to compare ultrafilters.

\begin{definition*}[\cite{Isbell:1965}]\label{def:tukey}
Let $\UUU$ and $\VVV$ be ultrafilters on $\omega$.
 We say that $\UUU \: {\leq}_{T} \: \VVV$, i.e.\@ $\UUU$ is \emph{Tukey reducible} to $\VVV$ or $\UUU$ is \emph{Tukey below} $\VVV$, if there is a map $\phi: \VVV \to \UUU$ such that $\forall A, B \in \VVV\left[A \subseteq B \implies \phi(A) \subseteq \phi(B)\right]$ and $\forall A \in \UUU \exists B \in \VVV\left[\phi(B) \subseteq A\right]$.
 We say that $\UUU \: {\equiv}_{T} \: \VVV$, i.e.\@ $\UUU$ is \emph{Tukey equivalent} to $\VVV$, if $ \UUU \: {\leq}_{T} \: \VVV$ and $\VVV \: {\leq}_{T} \: \UUU$.
\end{definition*}

Recently the interest in this ordering on ultrafilters was revived by the paper
\cite{Milovich:2008}. See also \cite{Dobrinen:2011} and \cite{Raghavan:2012}.

Finally, to eliminate some extraneous brackets, we will use the convenient
standard shorthand \(f^{-1}(n)\) to denote the preimage of \(\{n\}\) instead
of the formally more correct \(f^{-1}[\{n\}]\).

\section{There is no short unbounded chain of rapid P-points} \label{sec:noshort}

We start with a few simple observations. Below we use \(Poly\) to denote
the following set of polynomial functions\footnote{The fact that they are polynomials
is not important. We could as well have chosen all functions of some countable
elementary submodel of the universe; all that we need is that each function
grows much faster than the previous one.}: \(\{n^k:k<\omega\}\).

\begin{observation}
If  \(f:\omega\to\omega\) dominates \(Poly\) then so does
\(f^\prime(n)=\frac{f(n)}{2n}-n^2\).
\end{observation}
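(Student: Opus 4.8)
The plan is to verify the two defining inequalities of domination directly. The claim is that if $f$ dominates $Poly = \{n^k : k<\omega\}$, then so does $f'(n) = \frac{f(n)}{2n} - n^2$. Since domination means $g \leq^* f'$ for every $g \in Poly$, and since the functions $n^k$ are increasing in $k$ (for $n \geq 1$), it suffices to show that $f'$ eventually dominates each single power $n^k$; equivalently, that $f'$ dominates $Poly$.

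First I would use the hypothesis that $f$ dominates $Poly$ to extract the one consequence that does all the work, namely that $f$ grows faster than any fixed polynomial. Concretely, for any fixed $k$, we have $n^{k+2} \leq^* f(n)$, i.e. $2n^{k+1} \cdot n \leq^* 2 n^{k+2} \leq^* 2 f(n)$ for large $n$. I would then estimate $f'(n) = \frac{f(n)}{2n} - n^2$ from below: for all sufficiently large $n$ we have $\frac{f(n)}{2n} \geq \frac{2n^{k+2}}{2n} = n^{k+1} \geq n^k + n^2$ (the last inequality holding eventually since $n^{k+1}$ dominates $n^k + n^2$ for $k \geq 1$, and the $k=0$ case is handled by an even larger power). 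Subtracting $n^2$ then yields $f'(n) \geq n^k$ for all but finitely many $n$, which is exactly $n^k \leq^* f'(n)$.

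Since $k$ was arbitrary, this establishes $g \leq^* f'$ for every $g \in Poly$, so $f'$ dominates $Poly$. The only mild subtlety, which I would note explicitly, is that the finite exceptional sets depend on $k$, but this is harmless: domination of the family $Poly$ is a pointwise-in-$k$ statement (each $g \in Poly$ need only be eventually dominated), so no uniform bound is required.

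The main obstacle, such as it is, is purely bookkeeping: one must choose the right gap between the power used to bound $f$ from below and the power $n^k$ being dominated, so that after dividing by $2n$ and subtracting $n^2$ there is still room to spare. Choosing to compare against $n^{k+3}$ rather than $n^{k+2}$ gives a comfortable margin and absorbs both the division by $2n$ and the subtracted $n^2$ simultaneously, avoiding any case split on small $k$. Beyond that, the argument is a routine eventual-inequality computation with no set-theoretic content.
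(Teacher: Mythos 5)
Your proof is correct: the paper states this observation without any proof, and your direct verification (fix \(k\), bound \(f\) from below by a sufficiently high power of \(n\), then do the arithmetic after dividing by \(2n\) and subtracting \(n^2\)) is exactly the routine argument the authors intend. One microscopic quibble: comparing against \(n^{k+3}\) does not in fact avoid the case split at \(k=0\), since \(\frac{n^{k+2}}{2}-n^2\) is negative there; taking \(n^{k+4}\) works uniformly for all \(k\geq 0\), or one can simply note, as you do earlier, that eventual domination of \(n^1\) already gives domination of \(n^0\).
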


It is easy to see that the function \(n\) in \autoref{fact:alt-rapid} could as
well have been replaced by any function tending to infinity:

\begin{observation}\label{rapid-growth}
If  \(s:\omega\to\omega\) is a function tending to infinity
(i.e. \(\lim\inf s(n)=\infty\)), \(\pi:\omega\to\omega\) is a finite to one function,
and \(\mathcal U\) is a rapid ultrafilter then there is \(X\in\mathcal U\)
such that \((\forall n<\omega)(|\pre{\pi}{n}\cap X|\leq s(n))\)
\end{observation}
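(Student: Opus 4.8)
The plan is to deduce this directly from the partition characterization of rapidity in \autoref{fact:alt-rapid} by regrouping the fibers of \(\pi\) according to the value of \(s\). The key point is that the hypothesis \(\liminf s(n)=\infty\) says precisely that \(s\) is finite-to-one: for every \(j<\omega\) the set \(\{n:s(n)\leq j\}\) is finite. Consequently the composition \(\rho:=s\circ\pi\) is again finite-to-one, since \(\pre{\rho}{j}=\pre{\pi}{\{n:s(n)=j\}}\) is a finite union of the finite fibers \(\pre{\pi}{n}\). Hence \(\{\pre{\rho}{j}:j<\omega\}\) is a partition of \(\omega\) into finite sets (some blocks possibly empty).

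First I would apply \autoref{fact:alt-rapid} to this partition to obtain \(X\in\mathcal U\) with \(|X\cap\pre{\rho}{j}|\leq j\) for every \(j<\omega\). I would then observe that each \(\pi\)-fiber sits inside a single \(\rho\)-block carrying the correct label: if \(x\in\pre{\pi}{n}\) then \(\rho(x)=s(\pi(x))=s(n)\), so \(\pre{\pi}{n}\subseteq\pre{\rho}{s(n)}\). Combining these two observations gives \(|X\cap\pre{\pi}{n}|\leq|X\cap\pre{\rho}{s(n)}|\leq s(n)\) for all \(n\), which is exactly what is required.

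The construction is essentially forced, so there is no serious obstacle; the crux is simply recognizing that \(\liminf s=\infty\) upgrades \(s\) (and then \(\rho\)) to a finite-to-one map, which is what licenses the appeal to \autoref{fact:alt-rapid}. The remaining points are routine bookkeeping. The degenerate indices where \(s(n)=0\) are handled automatically, since the bound \(|X\cap\pre{\rho}{0}|\leq 0\) forces \(X\) to be disjoint from \(\pre{\rho}{0}\supseteq\pre{\pi}{n}\); and empty blocks \(\pre{\rho}{j}=\emptyset\) cause no difficulty. If one prefers to avoid empty blocks one may instead compose with the increasing enumeration of \(\ran(\rho)\), but this is cosmetic.
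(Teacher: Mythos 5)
Your proposal is correct and takes essentially the route the paper intends: the Observation is stated without a separate proof, being presented as an immediate adaptation of the partition characterization in \autoref{fact:alt-rapid}, and your regrouping of the \(\pi\)-fibers by the value of \(s\) (i.e., applying that fact to the fibers of \(s\circ\pi\), which is finite-to-one precisely because \(\liminf s = \infty\)) is exactly the natural way to carry this out. Your handling of the degenerate cases (empty blocks, indices with \(s(n)=0\)) is also fine and needs no changes.
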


We aim to show that each RK-increasing chain of rapid P-points of length
 \(<\omega_2\) has a rapid P-point on top. We do this by taking the chain and
recursively constructing the future projections (called \(g\) in the following
proposition) from the top to each ultrafilter in the sequence. If these
projections commute with each of the maps witnessing the RK-relations in the
chain, then the inverse images of the chain by these projections will generate
a P-filter. By a relatively easy argument we can guarantee that it will be an
ultrafilter (making sure that at each step we decide one set). To make it rapid
we have to work more. For this purpose, we will also build a tower on the side
(the \(T\)s in the following proposition), which will generate a rapid P-point
and, moreover, this P-point will be compatible with the final P-filter.

The following proposition gives a single step of the construction. The set \(A\) in the
assumption will be later used to make sure that the top filter is both an ultrafilter
and rapid. The key property that will keep the induction going will be the fact
that the \(g_\alpha\)s are finite to one but \emph{not} bounded-to-one in a very
strong sense: the size of the preimages of points (we will call this somewhat imprecisely the \emph{growth rate}
of \(g\)) will dominate a function \(s\) which will in turn dominate
the set \(Poly\) (conditions (1\&2)). Moreover the first part of condition (4)
will guarantee that the maps \(g_\alpha\) will not be bijections on some large set
(otherwise our supposed upper bound would be RK-equivalent to some \(\mathcal U_\alpha\)).

We use the following conventions: we imagine each \(\mathcal U_\alpha\) lives on a separate copy
of \(\omega\) (the \(\alpha\)-the level). We will use the letter \(m\) to denote numbers on the first
level (i.e. where \(\mathcal U_0\) lives), the letter \(n\) will denote numbers
living on some level \(0<\alpha<\delta\), the letter \(l\) will denote numbers
living on the final level (i.e. where the top ultrafilter we will be constructing
lives) and the letter \(k\) will denote numbers living on level \(\delta\). The
letters \(i\) and \(j\) will be used as unrelated natural numbers. If a function
has two ordinal indices \(\alpha\beta\), they indicate that it goes from the \(\alpha\)-th level
down to the \(\beta\)-th level. Finally, the functions \(g_\alpha\) go from the final level to the
level indicated by their ordinal index.

\begin{proposition}
Assume  \(\delta<\omega_1\) and
\(\langle \mathcal U_\alpha:\alpha\leq\delta\rangle\) is an RK-increasing sequence of
rapid P-points as witnessed by finite-to-one maps
\(\Pi=\{\pi_{\alpha\beta}:\beta\leq\alpha\leq\delta\}\) with
\(\pi_{\alpha\alpha}=Id\) for each \(\alpha\leq\delta\). Also, let
\(\bar{s}=\langle s_\alpha:\alpha<\delta\rangle\) be a sequence of maps,
each dominating \(Poly\), \(\bar{g}=\langle g_\alpha:\alpha<\delta\rangle\) a sequence of
finite-to-one maps, \(\bar{T}=\langle T_\alpha:\alpha<\delta\rangle\) a
\(\subseteq^*\)-decreasing sequence of subsets of \(\omega\),
and \(A\subseteq\omega\). Suppose, moreover, that the following conditions
are satisfied:

\begin{enumerate}[series=mainprop]
  \item \label{cond:growth} the growth rate of \(g_\alpha\) dominates \(s_\alpha\circ\pi_{\alpha0}\), i.e. \((\forall \alpha)(\forall^\infty n)(|g_\alpha^{-1}[\{n\}]|\geq s_\alpha(\pi_{\alpha0}(n)))\);
  \item \label{cond:s-order}the sequence \(\bar{s}\) is \(<^*\) decreasing in the following (stronger) sense:
  \[
    (\forall \alpha<\beta<\delta)
    (\forall^\infty m)
    \left(s_\alpha(m)\geq \frac{s_{\alpha}(m)}{2m}-m^2\geq s_\beta(m)\right);
  \]
  \item \label{cond:commuting} \(\Pi\cup\{g_\alpha:\alpha<\delta\}\) commute, i.e. for all \(\beta\leq\alpha<\delta\) the following diagram commutes on a \(\mathcal U_\alpha\)-large set
  \[
  \begin{tikzcd}
    \omega \arrow[d, "\pi_{\alpha\beta}"'] & \omega \arrow[l, "g_\alpha"'] \arrow[ld, "g_\beta"] \\
    \omega
  \end{tikzcd}
  \]
  formally there is \(X\in\mathcal U_\alpha\) such that \(g_\beta(l)=\pi_{\alpha\beta}(g_\alpha(l))\)
  for each \(l\in g_\alpha^{-1}[X]\); and
  \item \label{cond:g-preimages} for each  \(\alpha<\delta\) there is \(X\in\mathcal U_\alpha\) such that
  \[
    \lim_{n\in X}|\pre{g_\alpha}{n}\cap T_\alpha| = \infty;
  \]
  while also
  \[
    |\pre{g_\alpha}{n}\cap T_\alpha|\leq \min \big(\pre{g_\alpha}{n}\cup\{\pi_{\alpha0}(n)\}\big),
  \]
  for each \(n\in X\).
\end{enumerate}

Then we can extend the sequences  \(\bar{g},\bar{s},\bar{T}\) by constructing
the maps \(g_\delta\) and \(s_\delta\) and a set \(T_\delta\) so that
(corresponding modifications of) (1-4) are still satisfied and, moreover,
\((\forall i)(|T_\delta\cap i|\leq |A\cap i|^2)\) and \(T_\delta\) decides \(A\)
(i.e. \(T_\delta\subseteq A\) or \(T_\delta\cap A=\emptyset\)).
\end{proposition}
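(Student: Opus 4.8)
The plan is to construct the three new objects in the order $s_\delta$, $g_\delta$, $T_\delta$, each using its predecessors, and then to check that the modified forms of (1)--(4) survive. The map $s_\delta$ is the easy one. Condition (2) applied with $\beta=\delta$ demands $s_\delta(m)\le\frac{s_\alpha(m)}{2m}-m^2$ eventually for every $\alpha<\delta$, while $s_\delta$ must still dominate $Poly$. By the first observation each of the countably many functions $m\mapsto\frac{s_\alpha(m)}{2m}-m^2$ still dominates $Poly$, so a standard diagonalization (a pointwise minimum over a slowly growing finite set of indices) produces a single $s_\delta$ lying below all of them and above $Poly$. This secures the extension of (2).

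Next I would build $g_\delta\colon\omega\to\omega$, the projection from the final level onto level $\delta$, as the common refinement of the maps $\langle g_\beta:\beta<\delta\rangle$ labelled consistently by level-$\delta$ points. For a level-$\delta$ point $k$ set $B_k=\bigcap_{\beta<\delta}g_\beta^{-1}(\pi_{\delta\beta}(k))$, the set of final-level points $l$ whose thread $\langle g_\beta(l):\beta<\delta\rangle$ matches that of $k$; insisting that $g_\delta^{-1}(k)\subseteq B_k$ is exactly what makes the diagram in (3) commute at the index $\delta$. The point is that $B_k$ is large. Using (3) at the earlier indices, the finite sets $g_\beta^{-1}(\pi_{\delta\beta}(k))$ are, on a set in $\mathcal{U}_\delta$, nested and decreasing in $\beta$, and since $\delta$ is countable this family stabilizes; thus $B_k=g_{\beta^{*}}^{-1}(\pi_{\delta\beta^{*}}(k))$ for some $\beta^{*}<\delta$, whence $|B_k|\ge s_{\beta^{*}}(\pi_{\delta0}(k))$ by (1). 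I would then use the rapidity of $\mathcal{U}_\delta$ and \autoref{rapid-growth} to pass to $Y\in\mathcal{U}_\delta$ on which each level-$0$ fibre $\pi_{\delta0}^{-1}(m)$ meets $Y$ in at most $m$ points. Since any two points of $Y$ sharing a thread lie over the same $m=\pi_{\delta0}(k)$, at most $m$ of them share a given thread, and the gap in (2), namely $s_\delta(m)\le\frac{s_{\beta^{*}}(m)}{2m}-m^2$, leaves room to carve from the common set $B_k$ pairwise disjoint blocks $g_\delta^{-1}(k)$ of size at least $s_\delta(\pi_{\delta0}(k))$, one for each such $k$. This yields (1) and (3) for the index $\delta$, the countably many commuting requirements being merged onto the single set $Y$ by the P-point property of $\mathcal{U}_\delta$.

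Finally I would build $T_\delta$. Begin with a pseudo-intersection $T^{*}$ of the $\subseteq^{*}$-decreasing tower $\langle T_\alpha:\alpha<\delta\rangle$, which exists because $\delta$ is countable and gives $T_\delta\subseteq^{*}T_\alpha$ for all $\alpha<\delta$. The remaining demands are met fibrewise along $g_\delta$. For each $k$, examine $T^{*}\cap g_\delta^{-1}(k)$; the growth of the fibres secured above, together with (4) at the earlier indices (which forces the tower to meet the coarser fibres in growing sets), guarantees that these sets grow along a set in $\mathcal{U}_\delta$. To decide $A$, split $T^{*}\cap g_\delta^{-1}(k)$ into its part inside $A$ and its part outside, let $k$ belong to $X_A$ when the first part is the larger, and use the ultrafilter $\mathcal{U}_\delta$ to decide which of $X_A$ and its complement it contains; selecting all points from the chosen side makes $T_\delta\subseteq A$ or $T_\delta\cap A=\emptyset$ while keeping the per-fibre counts growing. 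From each selected fibre I would then take exactly $c_k$ points with $c_k\to\infty$ but $c_k\le\min(g_\delta^{-1}(k)\cup\{\pi_{\delta0}(k)\})$, which is precisely (4). The bound $c_k\le\min(g_\delta^{-1}(k))$ keeps the selected points sparse below any given $i$, and combined with the rapidity of $\mathcal{U}_\delta$ (in the form of \autoref{fact:alt-rapid} applied to a set read off from $A$ through $g_\delta$) a counting estimate delivers $|T_\delta\cap i|\le|A\cap i|^2$ for every $i$.

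The hardest part, I expect, is the interplay between $g_\delta$ and $T_\delta$ at a limit $\delta$. The fibres $g_\delta^{-1}(k)$ must be kept large enough for the growth condition (1), yet one must also arrange, when carving them out of $B_k$, that they swallow enough of the tower's pseudo-intersection to drive $|g_\delta^{-1}(k)\cap T_\delta|\to\infty$; this couples the construction of $g_\delta$ to that of $T_\delta$. Moreover all of the countably many requirements --- the commuting relations for $\beta<\delta$, the rapidity thinning, the growth of the tower in the fibres, and the decision on $A$ --- must be realised on one common set in $\mathcal{U}_\delta$. It is precisely here that the P-point and rapidity properties of $\mathcal{U}_\delta$, together with the sharp quantitative gap between consecutive $s_\alpha$ supplied by (2), are indispensable.
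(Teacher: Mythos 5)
Your overall architecture --- $s_\delta$ by diagonalization, $g_\delta$ by splitting fibres of the earlier maps, $T_\delta$ as a pseudointersection that decides $A$ --- parallels the paper's, but the central step, the construction of $g_\delta$, rests on a claim that the hypotheses do not support. You set $B_k=\bigcap_{\beta<\delta}g_\beta^{-1}(\pi_{\delta\beta}(k))$ and assert that ``on a set in $\mathcal{U}_\delta$'' the sets $g_\beta^{-1}(\pi_{\delta\beta}(k))$ are nested in $\beta$, so that $B_k$ stabilizes and $|B_k|\ge s_{\beta^*}(\pi_{\delta 0}(k))$. Nestedness at a single pair $\beta\le\alpha$ holds exactly for those $k$ with $\pi_{\delta\alpha}(k)\in X_{\alpha\beta}$ (where $X_{\alpha\beta}\in\mathcal{U}_\alpha$ is the commuting set from (3)) and with $\pi_{\alpha\beta}(\pi_{\delta\alpha}(k))=\pi_{\delta\beta}(k)$; this is indeed a $\mathcal{U}_\delta$-large set of $k$, but when $\delta$ is infinite there are \emph{infinitely many} pairs, and $\mathcal{U}_\delta$, being merely a P-point, yields only a pseudointersection of these countably many large sets, not a member of $\mathcal{U}_\delta$ contained in all of them. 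On a pseudointersection every individual $k$ may still fail nestedness at infinitely many pairs, and a single failure of commutation at $k$ can already make two of its fibres disjoint, i.e.\ $B_k=\emptyset$. So there need not be a single $k$, let alone $\mathcal{U}_\delta$-many, for which your $B_k$ is large. (A secondary problem: even granting nestedness, the stabilization index $\beta^*$ depends on $k$, and the inequality $s_{\beta^*}(m)\ge 2m\,(s_\delta(m)+m^2)$ needed to cut $B_k$ into at most $m$ pieces of size $s_\delta(m)$ holds only for $m$ past a threshold depending on $\beta^*(k)$; nothing in your sketch forces $m=\pi_{\delta0}(k)$ to be past it.) The paper's proof is organized precisely to avoid this: it fixes a cofinal $D\subseteq\delta$ of order type $\omega$, splits a set $X\in\mathcal{U}_\delta$ into finite blocks $K_\alpha$ ($\alpha\in D$), and for $k\in K_\alpha$ carves $g_\delta^{-1}(k)$ out of the \emph{single} fibre $g_\alpha^{-1}(\pi_{\delta\alpha}(k))$, demanding commutation only among the finitely many maps indexed by $D\cap(\alpha^{+}+1)$ and only on $K_\alpha$. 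Then for each fixed $\beta$ the commutation of $g_\delta$ with $g_\beta$ holds on all but finitely many blocks, hence on a set in $\mathcal{U}_\delta$, which is all that (3) requires; cofinality of $D$ handles the indices outside $D$.

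The second gap is the one you yourself flag as ``the hardest part'' but then do not close. Because your fibres $g_\delta^{-1}(k)$ are carved out of $B_k$ with no reference to the tower, nothing makes $|g_\delta^{-1}(k)\cap T^{*}|$ tend to infinity on a $\mathcal{U}_\delta$-large set (a piece of size $s_\delta(m)$ can simply miss $T^{*}$ entirely), and ``a counting estimate delivers $|T_\delta\cap i|\le|A\cap i|^2$'' names the goal rather than proving it. In the paper these requirements are engineered into $g_\delta$ itself, by three mechanisms absent from your sketch: first, \autoref{rapid-growth} applied to $\pi_{\delta\alpha}$ and the function $n\mapsto|g_\alpha^{-1}(n)\cap T_\alpha|/|D\cap\alpha|$ gives sets $Z_\alpha\in\mathcal{U}_\delta$ guaranteeing that the number of pieces each fibre of $g_\alpha$ must be cut into is at most $|g_\alpha^{-1}(n)\cap T_\alpha|/|D\cap\alpha|$, so every piece can be chosen to contain at least $|D\cap\alpha|$ points of $T_\alpha$; second, the blocks $K_\alpha$ are pushed far enough out that $|D\cap\alpha|\le\big|A\cap\min(\pi_{\delta0}[K_\alpha])\cap\min(g_\alpha^{-1}[\pi_{\delta\alpha}[K_\alpha]])\big|$, which is what bounds each trace $|g_\delta^{-1}(k)\cap T_\alpha|$ by $|A\cap m\cap\min g_\alpha^{-1}(n)|$ and makes $g_\delta$ at most $|A\cap l|$-to-one on $T^{\prime}\cap l$; third, a bijection trick (the Claim in the paper's proof) thins $g_\delta[T^{\prime}]$ to a set $X^{\prime}\in\mathcal{U}_\delta$ with $|X^{\prime}\cap g_\delta[l]|\le|A\cap l|$, so that $T^{\prime\prime}=g_\delta^{-1}[X^{\prime}]$ satisfies $|T^{\prime\prime}\cap l|\le|A\cap l|\cdot|A\cap l|$. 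Without these mechanisms, or substitutes for them, the growth demand and both upper bounds required of $T_\delta$ cannot be extracted from the stated hypotheses.
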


\begin{proof}
We first introduce some notation. Fix  \(D\subseteq\delta\) a cofinal subset of
\(\delta\) of order type \(\omega\) such that \(0\in D\). In our construction we will only deal with
\(\alpha\in D\). Given \(\alpha\in D\) we write
\[
  \alpha^+=\min \big(\left\{\beta\in D:\alpha<\beta\right\}\big)
\]
for the successor of \(\alpha\) in \(D\). We also let
\[
 \ord{\alpha}=|D\cap\alpha|,
\]
i.e. \(\alpha\) is the \(\ord{\alpha}\)-th element of \(D\). Next we use \(D\) to
enumerate \(Poly\) in an increasing sequence: \(Poly=\{p_\alpha:\alpha\in D\}\),
where \(p_\alpha\leq p_{\alpha^+}\).

Given  \(\alpha\in D\) let \(c_n^\alpha=\pre{\pi_{\delta\alpha}}{n}\).
Since \(\mathcal U_\delta\) is rapid we can use \autoref{fact:alt-rapid} to
find \(X_\alpha\in\mathcal U_\delta\) such that \(|X_\alpha\cap c^\alpha_n|\leq n\)
for each \(n<\omega\).
We can also assume that\footnote{otherwise throw finitely many elements of \(X_\alpha\) away
to get the first requirement and for the second intersect it with the set \(\pi_{\delta\alpha}^{-1}[X]\),
where \(X\) is the set guaranteed to exist by condition (4) above.}
\[
  \tag{$*$}\label{eq:growth-rate-g}
  |\pre{g_\alpha}{n}|\geq s_\alpha(\pi_{\alpha0}(n))
\]
for all \(n\) such that \(X_\alpha\cap c^\alpha_n\neq\emptyset\) and that
if \(n\in \pi_{\delta\alpha}[X_\alpha]\) then
\[
  \tag{$\dag$}\label{eq:g-bound}
  |\pre{g_\alpha}{n}\cap T_\alpha|\leq \min \big(\pre{g_\alpha}{n}\cup\{\pi_{\alpha0}(n)\}\big)
\]
Next, choose
\(Y_\alpha\in\mathcal U_\delta\) such that
\[
  \{g_\beta:\beta\in D\ \&\ \beta\leq\alpha^+\}\cup
  \{\pi_{\gamma\beta}:\gamma\leq\beta\leq\alpha^+,\gamma,\beta\in D\}
\]
commute on \(Y_\alpha\); more precisely, for every \(\beta\leq\gamma\leq\alpha^+\)
all in \(D\) and every \(k\in Y_\alpha\) and any \(l\in g_\gamma^{-1}[\pi_{\delta\gamma}(k)]\)
we have
\[
  \tag{$\ddag$}\label{eq:commuting}
  \pi_{\gamma\beta}(\pi_{\delta\gamma}(k))=\pi_{\delta\beta}(k)=g_\beta(l).
\]

Finally, since  \(g_\alpha\) and \(T_\alpha\) satisfy the first part of (4), we can use
\autoref{rapid-growth} to find \(Z_\alpha\in \mathcal U_\delta\) such that
\[\tag{$\mathsection$}\label{eq:pi-bound}
  (\forall n)\Big(
    |\pre{\pi_{\delta\alpha}}{n}\cap Z_\alpha|\leq
    \frac{|\pre{g_\alpha}{n}\cap T_\alpha|}{\ord{\alpha}}
  \Big),
\]
i.e. \(Z_\alpha\) is \(\ord{\alpha}\)-times more sparse then \(T_\alpha\).
(Just apply \autoref{rapid-growth} to
\(\pi=\pi_{\delta\alpha}\), \(\mathcal U=\mathcal U_\delta\),
and \(s(n)=|\pre{g_\alpha}{n}\cap T_\alpha|/\ord{\alpha}\).)

Since  \(\mathcal U_\delta\) is a P-point, we can find \(X\in\mathcal U_\delta\)
which is a pseudointersection of \(\{X_\alpha,Y_\alpha,Z_\alpha:\alpha\in D\}\).
Recursively construct a partition \(\{K_\alpha:\alpha\in D\}\) of \(X\)  into finite
sets and  \(s_\delta:\omega\to\omega\) such that

\[
  K_\alpha\subseteq\bigcap_{\beta\in D\cap\alpha^+} X_\beta\cap Y_\beta\cap Z_\beta,
\]

and

\begin{enumerate}[resume=mainprop]
  \item \label{eq:s-bound}\(s_\alpha(m)\geq s_{\alpha}(m)/m-m^2\geq s_\delta(m)\geq p_\alpha(m)\)
	     whenever \(m=\pi_{\delta0}(n)\) and \(n\in K_\alpha\);
  \item \label{eq:pi-closure} \(\pi_{\delta\alpha}^{-1}[\pi_{\delta\alpha}[K_\alpha]]\cap X\subseteq{K_\alpha}\); and
  \item \label{eq:ord-bound} \(\ord{\alpha}\leq \big|A\cap \min\big(\pi_{\delta0}[K_\alpha]\big)\cap\min \big(g_\alpha^{-1}[\pi_{\delta\alpha}[K_\alpha]]\big)\big|\).
\end{enumerate}

This is not hard to do: first find an increasing sequence \(\{k_\alpha:\alpha\in D\}\)
of natural numbers such that
\[
    X\setminus k_\alpha\subseteq \bigcap_{\beta\in D\cap\alpha^+} X_\beta\cap Y_\beta\cap Z_\beta;
\]
and
\[
  s_\alpha(m)\geq \frac{s_{\alpha}(m)}{m}-m^2\geq p_{\alpha^+}(m)\quad\&\quad \ord{\alpha}\leq |A\cap m\cap i|
\]
for all \(m = \pi_{\delta0}(n)\) and \(i\in g_{\alpha}^{-1}(\pi_{\delta\alpha}(n))\) with \(n\in X\setminus k_\alpha\). Then let
\[
K_\alpha=\pi_{\delta\alpha}^{-1}[\pi_{\delta\alpha}[X\cap [k_\alpha,k_{\alpha^+})]
\]
and
\[
 s_\delta\upharpoonright \pi_{\delta0}[K_\alpha]=p_\alpha.
\]
(Formally, this won't be a partition, since it will not cover \(X\cap[0,k_0)\); we can
just throw these finitely many elements out of \(X\)).

Let  \(J_\alpha=\pi_{\delta\alpha}[K_\alpha]\) and \(L_\alpha=g_\alpha^{-1}[J_\alpha]\).
Notice that since \(K_\alpha\subseteq Y_\alpha\), we can use \eqref{eq:commuting} and \eqref{eq:pi-closure} to conclude that
\(L_\alpha\cap L_\beta=\emptyset\) for distinct \(\alpha\neq\beta\in D\). This allows us to define \(g_\delta\) separately
on each \(L_\alpha\). For \(n\in J_\alpha\) let \(b_n^\alpha=\pre{g_\alpha}{n}\).

\begin{center}
\def\svgwidth{8cm}
\begingroup%
  \makeatletter%
  \providecommand\color[2][]{%
    \errmessage{(Inkscape) Color is used for the text in Inkscape, but the package 'color.sty' is not loaded}%
    \renewcommand\color[2][]{}%
  }%
  \providecommand\transparent[1]{%
    \errmessage{(Inkscape) Transparency is used (non-zero) for the text in Inkscape, but the package 'transparent.sty' is not loaded}%
    \renewcommand\transparent[1]{}%
  }%
  \providecommand\rotatebox[2]{#2}%
  \newcommand*\fsize{\dimexpr\f@size pt\relax}%
  \newcommand*\lineheight[1]{\fontsize{\fsize}{#1\fsize}\selectfont}%
  \ifx\svgwidth\undefined%
    \setlength{\unitlength}{472.16699315bp}%
    \ifx\svgscale\undefined%
      \relax%
    \else%
      \setlength{\unitlength}{\unitlength * \real{\svgscale}}%
    \fi%
  \else%
    \setlength{\unitlength}{\svgwidth}%
  \fi%
  \global\let\svgwidth\undefined%
  \global\let\svgscale\undefined%
  \makeatother%
  \begin{picture}(1,1.09418886)%
    \lineheight{1}%
    \setlength\tabcolsep{0pt}%
    \put(0,0){\includegraphics[width=\unitlength,page=1]{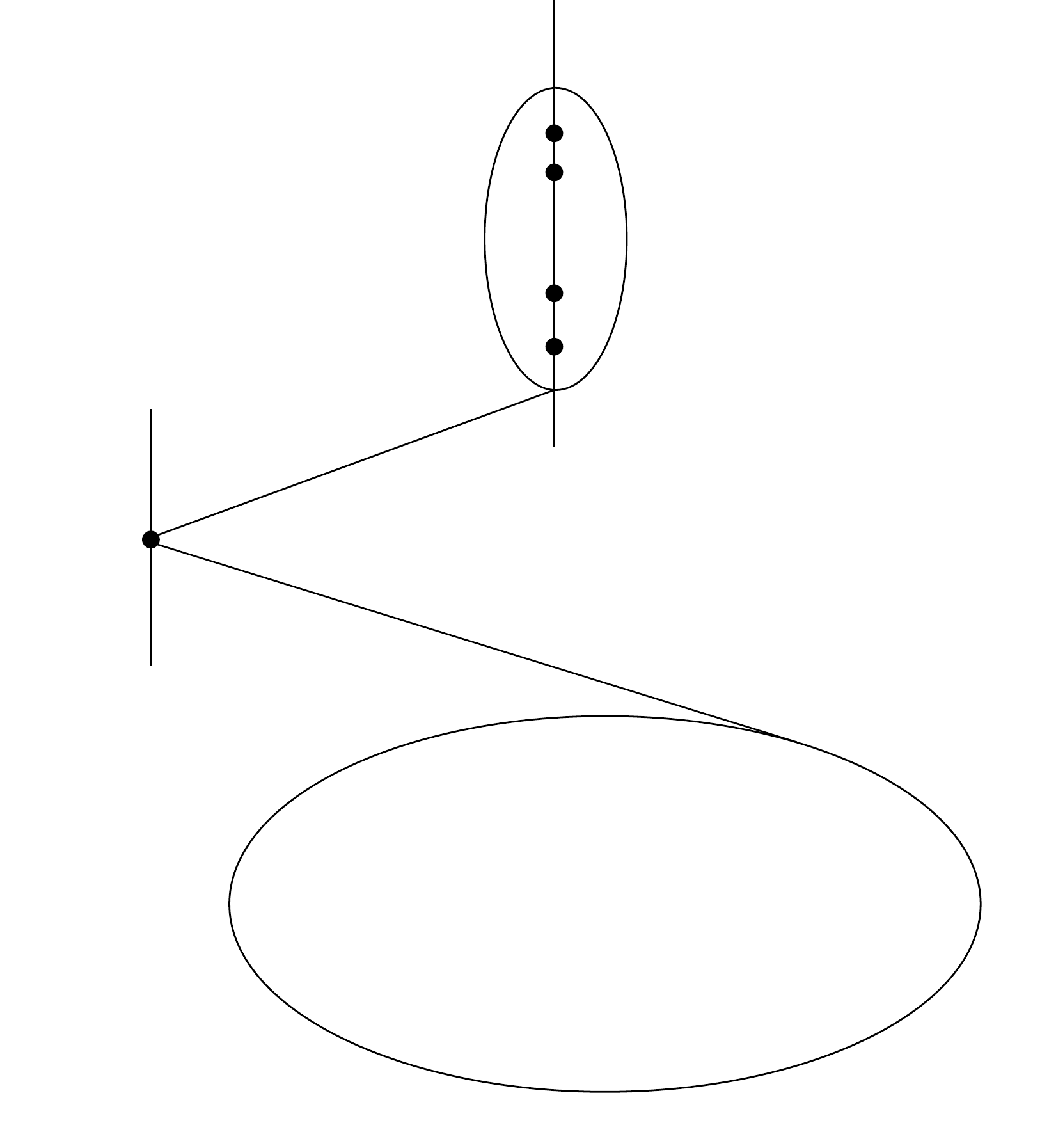}}%
    \put(0.79003807,0.4531134){\color[rgb]{0,0,0}\makebox(0,0)[lt]{\lineheight{0}\smash{\begin{tabular}[t]{l}$b_n^\alpha$\end{tabular}}}}%
    \put(0.09327856,0.58871146){\color[rgb]{0,0,0}\makebox(0,0)[lt]{\lineheight{0}\smash{\begin{tabular}[t]{l}$n$\end{tabular}}}}%
    \put(0.53869443,0.75290525){\color[rgb]{0,0,0}\makebox(0,0)[lt]{\lineheight{0}\smash{\begin{tabular}[t]{l}$k_0$\end{tabular}}}}%
    \put(0.26760391,0.83423241){\color[rgb]{0,0,0}\makebox(0,0)[lt]{\lineheight{0}\smash{\begin{tabular}[t]{l}$\pi_{\delta\alpha}$\end{tabular}}}}%
    \put(0.25631136,0.54828873){\color[rgb]{0,0,0}\makebox(0,0)[lt]{\lineheight{0}\smash{\begin{tabular}[t]{l}$g_\alpha$\end{tabular}}}}%
    \put(0.08994275,0.68610082){\color[rgb]{0,0,0}\makebox(0,0)[lt]{\lineheight{0}\smash{\begin{tabular}[t]{l}$J_\alpha$\end{tabular}}}}%
    \put(0.4728581,1.0641753){\color[rgb]{0,0,0}\makebox(0,0)[lt]{\lineheight{0}\smash{\begin{tabular}[t]{l}$K_\alpha$\end{tabular}}}}%
    \put(0,0){\includegraphics[width=\unitlength,page=2]{diagram-image.pdf}}%
    \put(0.53869443,0.80712336){\color[rgb]{0,0,0}\makebox(0,0)[lt]{\lineheight{0}\smash{\begin{tabular}[t]{l}$k_1$\end{tabular}}}}%
    \put(0,0){\includegraphics[width=\unitlength,page=3]{diagram-image.pdf}}%
    \put(0.25332764,0.17404106){\color[rgb]{0,0,0}\makebox(0,0)[lt]{\lineheight{0}\smash{\begin{tabular}[t]{l}$e^n_{k_0}$\end{tabular}}}}%
    \put(0.40051814,0.09702281){\color[rgb]{0,0,0}\makebox(0,0)[lt]{\lineheight{0}\smash{\begin{tabular}[t]{l}$e^n_{k_1}$\end{tabular}}}}%
    \put(0.6170251,0.93823359){\color[rgb]{0,0,0}\makebox(0,0)[lt]{\lineheight{0}\smash{\begin{tabular}[t]{l}$\leq m$\end{tabular}}}}%
    \put(0.78645668,0.03346901){\color[rgb]{0,0,0}\makebox(0,0)[lt]{\lineheight{0}\smash{\begin{tabular}[t]{l}$\geq s_\alpha(m)$\end{tabular}}}}%
    \put(0,0){\includegraphics[width=\unitlength,page=4]{diagram-image.pdf}}%
    \put(0.56806549,0.56184325){\color[rgb]{0,0,0}\makebox(0,0)[lt]{\lineheight{0}\smash{\begin{tabular}[t]{l}$g_\delta$\end{tabular}}}}%
    \put(0,0){\includegraphics[width=\unitlength,page=5]{diagram-image.pdf}}%
    \put(0.08349115,0.23314594){\color[rgb]{0,0,0}\makebox(0,0)[lt]{\lineheight{0}\smash{\begin{tabular}[t]{l}$\min b_\alpha^n$\end{tabular}}}}%
    \put(0,0){\includegraphics[width=\unitlength,page=6]{diagram-image.pdf}}%
    \put(-0.00149991,0.47082316){\color[rgb]{0,0,0}\makebox(0,0)[lt]{\lineheight{0}\smash{\begin{tabular}[t]{l}$m$\end{tabular}}}}%
    \put(0,0){\includegraphics[width=\unitlength,page=7]{diagram-image.pdf}}%
    \put(0.27704806,0.27570001){\color[rgb]{0,0,0}\makebox(0,0)[lt]{\lineheight{0}\smash{\begin{tabular}[t]{l}$d^n_{k_0}$\end{tabular}}}}%
    \put(0.45473625,0.27323165){\color[rgb]{0,0,0}\makebox(0,0)[lt]{\lineheight{0}\smash{\begin{tabular}[t]{l}$d^n_{k_1}$\end{tabular}}}}%
    \put(0.02285693,0.52520466){\color[rgb]{0,0,0}\makebox(0,0)[lt]{\lineheight{0}\smash{\begin{tabular}[t]{l}$\pi_{\alpha0}$\end{tabular}}}}%
    \put(0,0){\includegraphics[width=\unitlength,page=8]{diagram-image.pdf}}%
  \end{picture}%
\endgroup%

\end{center}

Fix \(\alpha\in D\) and \(n\in J_\alpha\) and let \(m=\pi_{\alpha0}(n)\). Then, since \(K_\alpha\subseteq X_\alpha\), by \eqref{eq:growth-rate-g} and \eqref{eq:s-bound} we have
\(|b_n^\alpha|\geq ms_\delta(m)+m^3\). Moreover, since \(K_\alpha\subseteq Z_\alpha\),
by \eqref{eq:pi-bound} and \eqref{eq:g-bound} we have
\[
  |K_\alpha\cap\pre{\pi_{\delta\alpha}}{n}| \leq m,
\]
and also
\[
 |K_\alpha\cap\pre{\pi_{\delta\alpha}}{n}|\leq \frac{|b_n^\alpha\cap T_\alpha|}{\ord{a}}\leq\frac{\min(b_n^\alpha\cup\{m\})}{\ord{a}}
\]

It follows that we can partition \(b^\alpha_n\) into pieces \(\{e^n_k:k\in K_\alpha\cap\pre{\pi_{\delta\alpha}}{n}\}\)
each of size \(\geq s_\delta(m)+m^2\) which, moreover, satisfy
\[
  \ord{\alpha}\leq|e^n_k\cap T_\alpha|\leq \min \big(b^\alpha_n\cup\{m\}\big).
\]

Due to \eqref{eq:ord-bound} we can shrink \(e^n_k\) to a smaller set \(d^n_k\) (throwing away at most \(m\)-many elements of
\(e^n_k\cap T_\alpha\)) such that
\[
  \tag{$\mathparagraph$}\label{eq:d-bound}
  \ord{\alpha}\leq |d^n_k\cap T_\alpha|\leq |A\cap m\cap\min b^\alpha_n|
\]
Since we threw away at most \(m\) elements from each \(e^n_k\), we still have
\(|d^n_k|\geq s_\delta(m)\). Now let \(g_\delta[d^n_k]=\{k\}\) and extend  \(g_\delta\) to all of \(\omega\) arbitrarily so
that the new values are outside of \(X\) and that the requirements on \(g_\delta\) are satisfied
(i.e. that it is finite-to-one, its growth rate is bounded below by \(s_\delta\), etc.). This finishes
the construction of \(g_\delta\). For future reference, let us note that
\(g_\delta\upharpoonright (T_\alpha\cap l)\) is at most \(|A\cap l|\)-to-one
for any \(l<\omega\).

Notice that if \(\beta<\alpha\in D\), \(k\in K_\alpha\) and \(g_\delta(l)=k\) then
\(g_\beta(l)=\pi_{\alpha\beta}(g_\alpha(l))\) and so
\[
  \pi_{\delta\beta}(k)=\pi_{\alpha\beta}(\pi_{\delta\alpha}(k))=g_\beta(l)
\]
so \eqref{cond:commuting} is satisfied for \(g_\delta\).
That condition \eqref{cond:s-order} for \(s_\delta\) is satisfied follows from \eqref{eq:s-bound}.
That condition \eqref{cond:growth} is satisfied for \(s_\delta\) and \(g_\delta\) follows from
the construction (\(|d^n_k|\geq s_\delta(m)\)).

(Formally, we have only checked the conditions for \(\beta,\alpha\in D\), but
this is clearly enough, since \(D\) is cofinal in \(\delta\).)

Finally we must construct  \(T_\delta\). Without loss of generality we may assume
that \(T_\alpha\subseteq T_\beta\) for \(\beta<\alpha\in D\) (otherwise we
could have carried out the construction for some finite modifications of
\(T_\alpha\)s and the resulting \(T_\delta\) would still work for the
original \(T_\alpha\)s). Let
\[
  T^\prime = \bigcup_{\alpha\in D}g^{-1}_\delta[K_\alpha]\cap T_\alpha.
\]
Then \(T^\prime\) is a pseudointersection of \(\{T_\alpha:\alpha\in D\}\).
Moreover \(g_\delta\) was constructed so that (see \eqref{eq:d-bound})
\[
  \ord{\alpha}\leq
  |\pre{g_\delta}{k}\cap T_\alpha|\leq
  \min\{m\}\cup \pre{g_\delta}{k}
\]
for each \(k\in K_\alpha\) and \(m=\pi_{\delta0}(k)\). It follows that \(T^\prime\) and \(g_\delta\)
satisfy (the corresponding modification of) \eqref{cond:g-preimages} and, in particular, that
\(g_\delta[T^\prime]\in\mathcal U_\delta\).

\begin{claim*} There is an \(X^\prime\in\mathcal U_\delta\), \(X^\prime\subseteq X\cap g_\delta[T^\prime]\) such that
\[
  |X^\prime\cap g_\delta[l]|\leq|A\cap l|,
\]
\end{claim*}
\begin{proof}[Proof of claim] Since \(|g_\delta[l]|\leq l\) we can find a bijection \(\pi:\omega\to\omega\)
such that \(\pi[g_\delta[l]]\subseteq l\). Since \(\pi\) is a bijection, the ultrafilter
\(\pi^*(\mathcal U_\delta)\) is rapid so there is \(Y\in\pi^*(\mathcal U_\delta)\) such that
\[
    |Y\cap l|\leq|A\cap l|.
\]
Let \(X^\prime =\pi^{-1}[Y]\cap X\cap g_\delta[T^\prime]\in\mathcal U_\delta\). Then we have
\[
X^\prime\cap g_\delta[l]\subseteq X^\prime\cap\pi^{-1}[l]\subseteq\pi^{-1}[Y]\cap\pi^{-1}[l]=\pi^{-1}[Y\cap l].
\]
Using this, the fact that \(\pi\) is a bijection and the choice of Y, we get
\[
    |X^\prime\cap g_\delta[l]|\leq|\pi^{-1}[Y\cap l]|=|Y\cap l|\leq |A\cap l|.
\]
which finishes the proof of the claim.
\end{proof}

Let \(T^{\prime\prime}=g_{\delta}^{-1}[X^\prime]\subseteq T^\prime\). Now, since
\(g_\delta\upharpoonright T^\prime\cap l\) is at most \(|A\cap l|\)-to-1,
we have
\[
  |T^{\prime\prime}\cap l|
  \leq |A\cap l|\cdot|X^\prime\cap g_\delta[l]|
  \leq |A\cap l|\cdot |A\cap l|\leq |A\cap l|^2
\]
Finally notice that
\[
    g_\delta[T^{\prime\prime}\cap A]\cup g_\delta[T^{\prime\prime}\setminus A]=
    g_\delta[T^{\prime\prime}]=X^\prime\in\mathcal U_\delta.
\]
So, since \(\mathcal U_\delta\) is an ultrafilter, we can choose
\(T_\delta\subseteq T^{\prime\prime}\) which decides \(A\) and still satisfies
\eqref{cond:g-preimages}. This finishes the proof.
\end{proof}

\begin{theorem}\label{thm:closed}
Assume CH. Every RK-increasing chain of rapid P-points of length  \(\omega_1\) has
an upper bound which is also a rapid P-point.
\end{theorem}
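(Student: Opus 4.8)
The plan is to construct the bound $\mathcal{W}$ by a transfinite recursion of length $\omega_1$ that feeds the chain into the Proposition one level at a time, and then to read $\mathcal{W}$ off the auxiliary tower at the end. First I would make two preliminary reductions. Using $\CH$, fix an enumeration $\langle A_\delta:\delta<\omega_1\rangle$ of $\mathcal{P}(\omega)$ (listing each subset once suffices, since the decision made at stage $\delta$ is permanent). Next, replacing the given witnessing maps by suitable modifications on large sets, I would arrange that $\Pi=\{\pi_{\alpha\beta}:\beta\le\alpha<\omega_1\}$ is a coherent commuting system, i.e. $\pi_{\beta\gamma}\circ\pi_{\alpha\beta}=\pi_{\alpha\gamma}$ on a $\mathcal{U}_\alpha$-large set for $\gamma\le\beta\le\alpha$; this is the standard normalization that makes the preimage filters cohere. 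I picture $\mathcal{W}$ as living on a fresh ``final'' copy of $\omega$, with finite-to-one maps $g_\alpha$ projecting the final level down to level $\alpha$.

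I would then build $\langle(g_\alpha,s_\alpha,T_\alpha):\alpha<\omega_1\rangle$, carrying conditions (1)--(4) of the Proposition as the running invariant. At a limit stage $\delta$ (every limit $\delta<\omega_1$ has cofinality $\omega$, so a cofinal $D$ of order type $\omega$ exists), I would apply the Proposition with $A=A_\delta$ to obtain $g_\delta,s_\delta,T_\delta$; it returns data still satisfying (1)--(4), with $T_\delta$ deciding $A_\delta$ and $|T_\delta\cap i|\le|A_\delta\cap i|^2$ for all $i$, and with $T_\delta$ a pseudointersection of the earlier $T_\alpha$ (so the tower stays $\subseteq^*$-decreasing). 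At $\delta=0$ and at successor stages the same goals are met by an easier direct construction: a one-level lift of $g_\gamma$ through $\pi_{\gamma+1,\gamma}$, spreading the large fibres of $g_\gamma$ to keep the growth rate above $s_{\gamma+1}\circ\pi_{\gamma+1,0}$ and to keep $T_{\gamma+1}$ compatible with $g_{\gamma+1}$. These cases are strictly simpler than the Proposition, since only one new level is coordinated.

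For the assembly, let $\mathcal{W}$ be the filter generated by the $\subseteq^*$-decreasing tower $\{T_\delta:\delta<\omega_1\}$. Since $T_\delta$ decides $A_\delta$ and every subset of $\omega$ occurs as some $A_\delta$ by $\CH$, the tower decides every set, so $\mathcal{W}$ is an ultrafilter. Being generated by a $\subseteq^*$-decreasing $\omega_1$-sequence, $\mathcal{W}$ is a P-point: given countably many members $W_n$, each $W_n\supseteq^* T_{\delta_n}$, and for $\delta=\sup_n\delta_n+1<\omega_1$ the element $T_\delta\in\mathcal{W}$ is a pseudointersection of the $W_n$. It is rapid by \autoref{fact:alt-rapid}: for infinite $A=A_\delta$ the member $T_\delta$ satisfies $|T_\delta\cap i|\le|A\cap i|^2$. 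It remains to verify $\mathcal{U}_\alpha\leq_{RK}\mathcal{W}$ via $g_\alpha$, i.e. $(g_\alpha)_*(\mathcal{W})=\mathcal{U}_\alpha$, for which it is enough to show $g_\alpha^{-1}[U]\in\mathcal{W}$ for every $U\in\mathcal{U}_\alpha$.

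The crux on which everything turns is that the tower must be \emph{globally} compatible with every preimage filter $g_\alpha^{-1}[\mathcal{U}_\alpha]$, so that the ultrafilter $\mathcal{W}$ read off the tower is forced to decide each $g_\alpha^{-1}[U]$ in favour of $g_\alpha^{-1}[U]$ rather than its complement. This is exactly what invariant (4) buys: condition (4) makes $g_\delta[T_\delta]\in\mathcal{U}_\delta$, whence $T_\delta$ meets every $g_\delta^{-1}[U]$ infinitely, and the coherence of $\Pi$ with the $g$'s propagates this to $T_\delta\cap g_\alpha^{-1}[U']$ for all $\alpha\le\delta$; since the tower is $\subseteq^*$-decreasing it extends to every $\alpha$ by passing to $T_{\max(\alpha,\delta)}$. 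Consequently no tower element is $\subseteq^* g_\alpha^{-1}[\omega\setminus U]$, so the complement is not in $\mathcal{W}$, and as $\mathcal{W}$ is an ultrafilter it must contain $g_\alpha^{-1}[U]$, giving $(g_\alpha)_*(\mathcal{W})=\mathcal{U}_\alpha$. I expect this ``compatibility forces the right decision'' step, together with checking that the Proposition's invariants genuinely persist across all $\omega_1$ stages so that the recursion never stalls, to be the main thing to get right; the per-step combinatorics is already isolated in the Proposition.
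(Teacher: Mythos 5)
Your proposal is correct and is essentially the paper's own proof: the paper likewise assumes (without loss of generality) that the witnessing maps commute, enumerates \([\omega]^\omega\) under CH, and recursively applies the Proposition to build the finite-to-one maps \(g_\alpha\) and the \(\subseteq^*\)-decreasing tower \(\langle T_\alpha:\alpha<\omega_1\rangle\), which then generates the rapid P-point upper bound with each \(g_\alpha\) witnessing \(\mathcal U_\alpha\leq_{RK}\mathcal W\). The only difference is that you spell out details the paper leaves implicit — the zero/successor stages (the Proposition as proved needs a cofinal \(\omega\)-sequence, hence limit \(\delta\)) and the final verification that condition (4) together with commutation forces \(g_\alpha^{-1}[U]\in\mathcal W\) for every \(U\in\mathcal U_\alpha\) — which is exactly what the paper's terse argument relies on.
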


\begin{proof}
Let  \(\langle\mathcal U_\alpha:\alpha<\omega_1\rangle\) be an
RK-increasing chain of rapid P-points as witnessed by finite-to-one maps
\(\Pi=\{\pi_{\alpha\beta}:\beta\leq\alpha<\omega_1\}\). Without loss of
generality we may assume that the maps commute in the sense of \eqref{cond:commuting}
in the previous proposition. Enumerate \([\omega]^\omega\) as \(\{A_\alpha:\alpha<\omega_1\}\). Recursively
build a sequence of finite-to-one maps \(\langle g_\alpha:\alpha<\omega_1\rangle\) and
a decreasing tower \(\langle T_\alpha:\alpha<\omega_1\rangle\) so that
\(T_\alpha\) decides \(A_\alpha\) and \(|T_\alpha\cap n|\leq |A_\alpha\cap n|^2\).
This can be done by repeatedly applying the previous proposition at each step.
In the end \(\langle T_\alpha:\alpha<\omega_1\rangle\) generates a rapid P-point and
the map \(g_\alpha\) witnesses that this P-point is above \(\mathcal U_\alpha\).
\end{proof}

We do not know the optimal hypothesis needed to carry out the above proof.
We leave it as a question for further research.
\begin{question} What is the optimal hypothesis needed to get the conclusion of Theorem \ref{thm:closed} with ${\omega}_{1}$ replaced by $\mathfrak{c}$.
In particular, does this hold if we replace CH by \(\mathfrak b=\mathfrak c\)?
Or even \(\mathfrak d=\mathfrak c\)?
\end{question}

\section{A short unbounded chain of P-points} \label{sec:short}
In this section we show, assuming  \(\Diamond\), that there is an RK-chain of
P-points of length $\omega_1$ which has no P-point RK-above. We assume \(\Diamond\) only for simplicity;
a more involved argument using the Devlin-Shelah weak diamond can be used to
construct the chain, e.g., under CH.

\begin{definition*}
Let  \(\bar{\mathcal U}=\langle \mathcal U_\alpha:\alpha<\delta\rangle\) be a
sequence of ultrafilters and \(\Pi=\langle\pi_{\alpha\beta}:\beta\leq\alpha\leq\delta\rangle\)
a family of maps from \(\omega\) to \(\omega\). We say that \emph{\(\Pi\) commutes
with respect to \(\bar{\mathcal U}\)}, if for \(\beta\leq\alpha\leq\gamma\leq\delta\)
there is \(X\in\mathcal U_\gamma\) such
that \((\forall i\in X)(\pi_{\alpha\beta}(\pi_{\gamma\alpha}(i))=\pi_{\gamma\beta}(i))\).
When the sequence \(\bar{\mathcal U}\) is clear from the context, we just say
that \(\Pi\) commutes.
\end{definition*}
\begin{notation*}
Given two families \(\Pi_i=\langle\pi^i_{\delta\alpha}:\alpha<\delta\rangle,i<2\)
of maps and a sequence of ultrafilters \(\bar{\mathcal U}\) as above, we write \(f:\Pi_0\to_{\bar{\mathcal U}}\Pi_1\) to indicate that \(f\) is a map from
\(\omega\) to \(\omega\) and for each \(\alpha<\delta\) there is an \(X\in(\pi^0_{\delta\alpha})^{-1}[\mathcal U_\alpha]\)
such that \(\pi^0_{\delta\alpha}(n)=\pi^1_{\delta\alpha}(f(n))\) for all \(n\in X\).
\end{notation*}

\begin{definition*}
Given two families of maps
 \(\Pi_i=\langle\pi^i_{\delta\alpha}:\alpha<\delta\rangle,i<2\)
we say that \(\Pi_1\prec \Pi_0\) if
\[
(\forall \alpha<\delta)(\forall^\infty n<\omega)\big(
  |\pre{(\pi^1_{\delta\alpha})}{n}| > n\cdot |\pre{(\pi^0_{\delta\alpha})}{n}|
\big)
\]
Moreover, if \(\pi_0,\pi_1\) are two maps, \(\mathcal U\) is an ultrafilter,
and \(X\in[\omega]^\omega\), we write
\[
  \pi_1\prec_{X,\mathcal U} \pi_0
\]
if there is a \(Y\in\mathcal U\) and \(s:\omega\to\omega\) tending to infinity
such that
\[
  (\forall n\in Y)\big(|\pre{\pi_1}{n}\cap X|>s(n)\cdot|\pre{\pi_0}{n}\cap X|\big).
\]
\end{definition*}

\begin{observation}
Assume  \(\bar{\mathcal U}=\langle U_{\alpha}:\alpha<\delta\rangle\) \label{refine_maps}
is an
RK-increasing chain of P-points of length \(\delta<\omega_1\) as witnessed
by a family of finite-to-one maps \(\Pi=\langle\pi_{\alpha\beta}:\beta\leq\alpha<\delta\rangle\).
Suppose, moreover, that we are given
a family of finite-to-one maps \(\Pi_0=\langle \pi^0_{\delta\alpha}:\alpha<\delta\rangle\)
such that \(\Pi\cup\Pi_0\) commute. Then there is a family
\(\Pi_1=\langle \pi^1_{\delta\alpha}:\alpha<\delta\rangle\) such that \(\Pi_1\prec \Pi_0\) and \(\Pi\cup \Pi_1\) still commutes.
\end{observation}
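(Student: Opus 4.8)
The plan is to build $\Pi_1$ by precomposing every map of $\Pi_0$ with one fixed finite-to-one surjection $h\colon\omega\to\omega$, setting $\pi^1_{\delta\alpha}=\pi^0_{\delta\alpha}\circ h$ for each $\alpha<\delta$. The point of this choice is that precomposition by a single map carries every commuting relation of $\Pi\cup\Pi_0$ over to $\Pi\cup\Pi_1$ automatically, so all the genuine content is pushed into choosing the fiber sizes of $h$ large enough to force $\Pi_1\prec\Pi_0$.

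To construct $h$ I would diagonalize against the countably many maps of $\Pi_0$. Since $\delta<\omega_1$, fix an enumeration $\langle\alpha_k:k<\omega\rangle$ of $\delta$ and set
\[
  F(j)=1+\max\{\pi^0_{\delta\alpha_k}(j):k\le j\};
\]
then let $h$ be any finite-to-one surjection with $|h^{-1}(j)|=F(j)$ (e.g.\ collapse the $j$-th block of a partition of $\omega$ into consecutive intervals of lengths $F(0),F(1),\dots$ to the point $j$). For each fixed $\alpha=\alpha_k$ this gives $|h^{-1}(j)|>\pi^0_{\delta\alpha}(j)$ for all $j\ge k$. Since each $\pi^0_{\delta\alpha}$ is finite-to-one and $\pi^1_{\delta\alpha}=\pi^0_{\delta\alpha}\circ h$, the fiber identity $\pre{(\pi^1_{\delta\alpha})}{n}=h^{-1}[\pre{(\pi^0_{\delta\alpha})}{n}]$ yields $|\pre{(\pi^1_{\delta\alpha})}{n}|=\sum_{j\in\pre{(\pi^0_{\delta\alpha})}{n}}|h^{-1}(j)|$; and because only finitely many $n$ have a fiber meeting $[0,k)$, for all remaining $n$ every $j$ in the fiber over $n$ satisfies $|h^{-1}(j)|>\pi^0_{\delta\alpha}(j)=n$, whence $|\pre{(\pi^1_{\delta\alpha})}{n}|>n\cdot|\pre{(\pi^0_{\delta\alpha})}{n}|$. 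This is exactly $\Pi_1\prec\Pi_0$, and each $\pi^1_{\delta\alpha}$ is finite-to-one as a composite of finite-to-one maps.

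The remaining step, checking that $\Pi\cup\Pi_1$ still commutes, is where the one subtlety lies: the relations internal to $\Pi$ are untouched, and for $\beta\le\alpha<\delta$ a set $X$ witnessing $\pi_{\alpha\beta}\circ\pi^0_{\delta\alpha}=\pi^0_{\delta\beta}$ gives $\pi_{\alpha\beta}\circ\pi^1_{\delta\alpha}=\pi^1_{\delta\beta}$ on $h^{-1}[X]$; the thing to verify is that $h^{-1}[X]$ is again large for the correct preimage filter, which follows because $X\supseteq(\pi^0_{\delta\alpha})^{-1}[U]$ for some $U\in\mathcal U_\alpha$ forces $h^{-1}[X]\supseteq(\pi^1_{\delta\alpha})^{-1}[U]$. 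I expect this pull-back-of-witnesses bookkeeping to be the only place needing care; the domination giving $\Pi_1\prec\Pi_0$ and the preservation of commutativity under precomposition are both routine.
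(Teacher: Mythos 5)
Your proof is correct, and it follows the same route as the paper's: precompose all of $\Pi_0$ with a single finite-to-one map with large fibers, so that every commuting relation transfers verbatim (witness sets pull back, since $h^{-1}\bigl[(\pi^0_{\delta\alpha})^{-1}[U]\bigr]=(\pi^1_{\delta\alpha})^{-1}[U]$), and only the fiber-size estimate needs work. The one difference is in how the fiber sizes are chosen, and here your extra care is not wasted but actually necessary. The paper takes an arbitrary $\pi$ with $|\pi^{-1}(j)|\geq j$ and sets $\pi^1_{\delta\alpha}=\pi^0_{\delta\alpha}\circ\pi$; but since $|(\pi^1_{\delta\alpha})^{-1}(n)|=\sum_{j\in(\pi^0_{\delta\alpha})^{-1}(n)}|\pi^{-1}(j)|$, and the elements $j$ of the fiber over $n$ can be much smaller than $n$ (for $\pi^0_{\delta\alpha}(j)=2j$ the fiber over an even $n$ is $\{n/2\}$, so a $\pi$-fiber of size exactly $n/2$ falls short of $n\cdot 1$), the bare condition $|\pi^{-1}(j)|\geq j$ does not in general yield $\Pi_1\prec\Pi_0$. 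What is really needed is that $|\pi^{-1}(j)|$ eventually dominate $\pi^0_{\delta\alpha}(j)$ for each fixed $\alpha$, and since $\delta$ is countable this is exactly what your diagonal choice $F(j)=1+\max\{\pi^0_{\delta\alpha_k}(j):k\leq j\}$ delivers; your observation that only finitely many $n$ have $\pi^0_{\delta\alpha}$-fibers meeting $[0,k)$ then closes the estimate. So your version is the one that actually proves the statement as written, while the paper's one-liner needs the same repair. (One wrinkle shared by both proofs: for $n$ outside the range of $\pi^0_{\delta\alpha}$ both fibers are empty and the strict inequality in the definition of $\prec$ fails; the definition is evidently meant to be read over $n$ in the range, which is also the reading the later forcing arguments use.)
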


\begin{proof}
Fix an arbitrary finite-to-one  \(\pi\) such that
\(|\pre{\pi}{n}|\geq n\) and let \(\pi_{\delta\alpha}^1(n)=\pi_{\delta\alpha}^0(\pi(n))\).
\end{proof}

\begin{definition*}
Given an RK-increasing chain of P-points  \(\bar{\mathcal U}\) of
length \(\delta\) for some limit \(\delta<\omega_1\) a
family of finite-to-one maps \(\Pi=\langle\pi_{\alpha\beta}:\beta\leq\alpha<\delta\rangle\) witnessing that the
chain is RK-increasing and two families of finite-to-one maps \(\Pi_i=\langle\pi^i_{\delta\alpha}:\alpha<\delta\rangle,i<2\), such that
\(\Pi_1\prec\Pi_0\) and \(\Pi\cup\Pi_i\) commutes w.r.t. \(\bar{\mathcal U}\) for \(i<2\), we define the forcing
\[
\mathbb P(\bar{\mathcal U},\Pi,\Pi_0,\Pi_1)=
\bigg(\big\{
  X\in[\omega]^\omega:
  (\forall \alpha<\delta)
  (\pi_{\delta\alpha}^1\prec_{X,\mathcal U_\alpha}\pi_{\delta\alpha}^0)\big\},
  \subseteq^*\bigg)
\]
\end{definition*}

For the following observation and propositions, fix \(\delta\), \(\bar{\mathcal U}\), \(\Pi,\Pi_0\), and \(\Pi_1\)
as in the definition.

\begin{observation}
The forcing  \(\mathbb P(\bar{\mathcal U},\Pi,\Pi_0,\Pi_1)\) contains
\(\omega\).
\end{observation}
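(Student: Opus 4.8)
The plan is to verify directly that $\omega$, viewed as an element of $\cube$, satisfies the defining condition of the forcing, namely that $\pi^1_{\delta\alpha}\prec_{\omega,\mathcal U_\alpha}\pi^0_{\delta\alpha}$ holds for every $\alpha<\delta$. Since $\omega$ is trivially an infinite subset of itself, the only real content is to produce, for each $\alpha$, a witnessing set $Y\in\mathcal U_\alpha$ together with a function $s$ tending to infinity, as demanded by the definition of $\prec_{\omega,\mathcal U_\alpha}$.

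The key observation is that when $X=\omega$ the intersections appearing in the definition of $\prec_{X,\mathcal U}$ become trivial, since $\pre{\pi^i_{\delta\alpha}}{n}\cap\omega=\pre{\pi^i_{\delta\alpha}}{n}$. Thus the inequality required to witness $\pi^1_{\delta\alpha}\prec_{\omega,\mathcal U_\alpha}\pi^0_{\delta\alpha}$ reduces to $|\pre{\pi^1_{\delta\alpha}}{n}|>s(n)\cdot|\pre{\pi^0_{\delta\alpha}}{n}|$, which is exactly the clause in the global relation $\Pi_1\prec\Pi_0$ once we make the concrete choice $s(n)=n$. Recall that $\Pi_1\prec\Pi_0$ is part of the standing hypotheses fixed just before this observation, so this clause is available to us.

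Concretely, I would set $s(n)=n$, which tends to infinity, and let
\[
  Y_\alpha=\big\{n<\omega:|\pre{\pi^1_{\delta\alpha}}{n}|>n\cdot|\pre{\pi^0_{\delta\alpha}}{n}|\big\}.
\]
The defining clause of $\Pi_1\prec\Pi_0$ guarantees $\forallbutfin n\,(n\in Y_\alpha)$, so $Y_\alpha$ is cofinite. Since each $\mathcal U_\alpha$ is a nonprincipal ultrafilter (being a P-point), every cofinite set belongs to $\mathcal U_\alpha$, whence $Y_\alpha\in\mathcal U_\alpha$. The pair $(s,Y_\alpha)$ then witnesses $\pi^1_{\delta\alpha}\prec_{\omega,\mathcal U_\alpha}\pi^0_{\delta\alpha}$, and as $\alpha<\delta$ was arbitrary, we conclude $\omega\in\mathbb P(\bar{\mathcal U},\Pi,\Pi_0,\Pi_1)$.

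There is essentially no obstacle here: the statement is a well-definedness check ensuring the forcing is nontrivial (it has a largest condition). The only point requiring any care is matching the ``for all but finitely many $n$'' quantifier in the global relation $\Pi_1\prec\Pi_0$ with the localized relation's demand for a set in the ultrafilter, and this is immediate from nonprincipality of the P-points $\mathcal U_\alpha$.
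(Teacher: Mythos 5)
Your proof is correct, and it is exactly the verification the paper intends: the authors state this observation without proof, since taking $X=\omega$ makes the relation $\pi^1_{\delta\alpha}\prec_{\omega,\mathcal U_\alpha}\pi^0_{\delta\alpha}$ collapse to the standing hypothesis $\Pi_1\prec\Pi_0$ with witness $s(n)=n$ and a cofinite set $Y_\alpha\in\mathcal U_\alpha$. Your handling of the quantifier match (cofinite sets lie in the nonprincipal ultrafilters $\mathcal U_\alpha$) is precisely the point being silently used.
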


\begin{proposition}
The forcing  \(\mathbb P(\bar{\mathcal U},\Pi,\Pi_0,\Pi_1)\) is     \label{closed}
\(\sigma\)-closed.
\end{proposition}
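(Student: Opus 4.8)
The plan is to verify $\sigma$-closure directly: every $\subseteq^*$-decreasing $\omega$-sequence of conditions has a lower bound. So fix $\langle X_n:n<\omega\rangle$ with $X_{n+1}\subseteq^* X_n$ and each $X_n\in\mathbb P(\bar{\mathcal U},\Pi,\Pi_0,\Pi_1)$. I first normalize: the relation $\pi^1_{\delta\alpha}\prec_{X,\mathcal U_\alpha}\pi^0_{\delta\alpha}$ only constrains the fibers over a set in $\mathcal U_\alpha$, hence is insensitive to finite changes of $X$, so I may delete finitely many points from each $X_n$ and assume the sequence is genuinely $\subseteq$-decreasing. The goal is then a pseudointersection $X$ (so that $X\subseteq^* X_n$ for all $n$) that still satisfies, for every $\alpha<\delta$, the defining relation of the forcing. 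Crucially $\delta<\omega_1$, so there are only countably many coordinates $\alpha$ to control simultaneously.

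Next I collapse the countably many witnesses at each coordinate to a single one. For every $\alpha$ and $n$ fix $Y^n_\alpha\in\mathcal U_\alpha$ and $s^n_\alpha$ tending to infinity witnessing $\pi^1_{\delta\alpha}\prec_{X_n,\mathcal U_\alpha}\pi^0_{\delta\alpha}$. Since $\mathcal U_\alpha$ is a P-point, I choose $Y_\alpha\in\mathcal U_\alpha$ with $Y_\alpha\subseteq^* Y^n_\alpha$ for all $n$, and a diagonal choice yields a single $s_\alpha\to\infty$ lying below the relevant $s^n_\alpha$ on a tail of $Y_\alpha$. This packages the hypothesis as: for each $\alpha$ and each $n$, the ratio $|\pre{(\pi^1_{\delta\alpha})}{k}\cap X_n|/|\pre{(\pi^0_{\delta\alpha})}{k}\cap X_n|$ exceeds $s_\alpha(k)$ for all but finitely many $k\in Y_\alpha$.

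I then build $X$ by bookkeeping along the coarsest level. Using the commutativity of $\Pi\cup\Pi_0$ and of $\Pi\cup\Pi_1$ with respect to $\bar{\mathcal U}$, each fiber $\pre{(\pi^0_{\delta\alpha})}{k}$, resp. $\pre{(\pi^1_{\delta\alpha})}{k}$, lies (modulo a set outside the relevant ultrafilter) inside a single level-$0$ block $\pre{(\pi^0_{\delta 0})}{m}$, resp. $\pre{(\pi^1_{\delta 0})}{m}$, with $m=\pi_{\alpha0}(k)$. I fix a set $M\in\mathcal U_0$ of level-$0$ indices, copy $X_{j_m}$ on each block with $j_m\to\infty$ chosen monotonically, and assemble $X$ as a pseudointersection so that the copy index on the block carrying the $\pi^1_{\delta\alpha}$-fiber over $k$ is at most every index met by the $\pi^0_{\delta\alpha}$-fiber over $k$. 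Because $\pi_{\alpha0}^{-1}[M]\in\mathcal U_\alpha$, the set of $k$ whose fibers are cleanly captured is $\mathcal U_\alpha$-large; on it the numerator is computed from one (denser) copy while the denominator is bounded by a sparser copy, so the ratio still tends to infinity, giving $\pi^1_{\delta\alpha}\prec_{X,\mathcal U_\alpha}\pi^0_{\delta\alpha}$. Since $X\subseteq^* X_n$ for every $n$, $X$ is the desired lower bound.

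The main obstacle is precisely that the two fiber structures $\Pi_0$ and $\Pi_1$ need not be aligned: a single ``copy index'' function on $\omega$ cannot be constant on both the $\pi^0$- and the $\pi^1$-fibers at once (their common coarsening may have infinite blocks), so the numerator and denominator of the decisive ratio are governed by different members $X_j$ of the sequence, and a careless pseudointersection can inflate the denominator while thinning the numerator. I expect the real work to lie in arranging the block indices $j_m$ monotonically and exploiting the large size gap $\Pi_1\prec\Pi_0$ supplied by Observation~\ref{refine_maps} (where $|\pre{(\pi^1_{\delta\alpha})}{n}|>n\cdot|\pre{(\pi^0_{\delta\alpha})}{n}|$) to absorb the loss from this misalignment, so that the ratio survives on a $\mathcal U_\alpha$-large set for every one of the countably many coordinates $\alpha$ simultaneously.
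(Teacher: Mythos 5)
Your first two steps (normalizing to a genuinely $\subseteq$-decreasing sequence, then using the P-point property of each $\mathcal U_\alpha$ to collapse the countably many witnesses to a single pair $(Y_\alpha,s_\alpha)$) match the paper's opening moves, and you correctly isolate the constraint that makes any block-by-block construction work: if $X$ copies $X_{j_m}$ on the block $(\pi^1_{\delta 0})^{-1}(m)$, then for each relevant $k$ the index of the block containing $(\pi^1_{\delta\alpha})^{-1}(k)$ must be at most the index of every block meeting $(\pi^0_{\delta\alpha})^{-1}(k)$, so that both sides of the ratio are computed inside one and the same condition $X_j$. But the proof stops exactly there: you never show that this constraint can be met compatibly with the two other requirements, namely $j_m\to\infty$ (needed for $X$ to be a pseudointersection) and largeness, in $\mathcal U_0$, of the set of cleanly captured blocks. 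These requirements genuinely conflict. Nothing in the hypotheses relates $\pi^0_{\delta 0}$ to $\pi^1_{\delta 0}$ beyond the fiber-size gap, so the meeting relation between the two block structures can contain infinite descending chains --- for instance $(\pi^0_{\delta 0})^{-1}(m+1)$ can meet $(\pi^1_{\delta 0})^{-1}(m)$ for every $m$ --- and along such a chain your constraint forces $j_{m+1}\leq j_m$, so any assignment satisfying it on all blocks is eventually constant. Hence one cannot use all blocks: one must first thin to a family of blocks so sparse that distinct used blocks never interact at all, and then argue that the union of the used blocks' indices is still in $\mathcal U_0$. That combinatorial core is precisely what is missing.

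Your proposed fallback --- absorbing the misalignment loss via the gap $\Pi_1\prec\Pi_0$ from Observation~\ref{refine_maps} --- cannot replace it: that gap compares \emph{full} fibers, whereas the forcing compares fibers intersected with $X$. If the numerator's block carries a late (hence thin) condition $X_j$ while the denominator picks up points of an earlier $X_{j'}\supsetneq X_j$, then $|(\pi^1_{\delta\alpha})^{-1}(k)\cap X_j|$ can be tiny or even empty while $|(\pi^0_{\delta\alpha})^{-1}(k)\cap X_{j'}|$ is large, and no a priori inequality between full fibers prevents this. What actually closes the gap in the paper is: (i) interval endpoints $m_\alpha$ chosen so large that preimages, under \emph{all} the maps $\pi^j_{\delta\beta}$ involved, of points below $m_\alpha$ lie strictly below preimages of points above $m_{\alpha^+}$ (the paper's separation condition ($*$)), which kills the descending chains above; (ii) using only every other interval, with the parity chosen by pigeonhole so that the witness sets remain in the relevant ultrafilters --- and note the paper needs this at cofinally many levels $\alpha\in D$, because each interval can only accommodate the tail conditions of the finitely many levels in $D\cap\alpha^{+}$; your claim of controlling ``every one of the countably many coordinates $\alpha$ simultaneously'' per block hides the same scheduling problem, which the paper resolves by the closing sandwiching observation (the relation at $\alpha$ and at $\alpha'$ implies it at every $\beta$ in between); and (iii) copying a \emph{single} condition $X_{\ord{\alpha}}$ on the whole preimage region of each used interval, so that both numerator and denominator over every relevant point come from the same condition (the paper's ($\dag$)) and the ratio transfers with no loss whatsoever. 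Your outline is compatible with this machinery and could be completed along these lines, but as it stands it names the difficulty rather than resolving it.
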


\begin{proof}
Let  \(\langle X_n:n<\omega\rangle\) be a descending sequence of
conditions and, without loss of generality, assume \(X_{n+1}\subseteq X_n\) for
all \(n<\omega\). Fix a cofinal subset \(D\subseteq\delta\) of order type
\(\omega\) and, as before, write \(\alpha^+=\min\big(D\setminus(\alpha+1)\big)\) and
\(\ord{\alpha}=|D\cap \alpha|\). For each \(n<\omega\)
and \(\alpha\in D\) fix \(Y^\alpha_n\in\mathcal U_\alpha\) and \(s^\alpha_n\)
witnessing \(\pi_{\delta\alpha}^1\prec_{X_n,\mathcal U_\alpha}\pi_{\delta\alpha}^0\).
Then for each \(\alpha\in D\) let \(Y^\alpha\in\mathcal U_\alpha\) be a
pseudointersection of \(\{Y^\alpha_n:n<\omega\}\subseteq\mathcal U_\alpha\)
and fix a function \(s:\omega\to\omega\), tending to infinity, such that
\(s\leq^* s^\alpha_n\) for all \(\alpha\in D,n<\omega\).
For each \(\alpha\in D\) fix \(m_\alpha<\omega\) such that
\[
  \big|\pre{(\pi_{\delta\beta}^1)}{m}\cap X_{\ord{\alpha}}\big|\geq
  s(m)\cdot\big|\pre{(\pi_{\delta\beta}^0)}{m}\cap X_{\ord{\alpha}}\big|,
\]
for each \(\beta\in D\cap\alpha^+\) and
\(m\in Y^\alpha\setminus m_\alpha\). We also choose each \(m_\alpha\) large
enough to make sure that
\[
  \tag{$*$}\label{eq:distance}
  \max\big((\pi^i_{\delta\beta^\prime})^{-1}[m^\prime]\big) <
  \min\big((\pi^j_{\delta\beta})^{-1}(m)\big)
\]
for each \(\beta,\beta^\prime\in D\cap\alpha^+\), \(i,j<2\) and
\(m^\prime<m_\alpha<m_{\alpha+}<m\). For \(\alpha\in D\) let
\[
  X^\alpha = \bigcup_{\beta\in D\cap\alpha^+}
        (\pi^1_{\delta\beta})^{-1}
        [m_\alpha,m_{\alpha^+})\cap X_{\ord{\alpha}}
\]
Next let \(D_0\) and \(D_1\) be the set of even and odd elements of \(D\), respectively. Choose a cofinal
\(D^\prime\subseteq D\) and \(i<2\) so that
\[
  Z^\alpha=\bigcup_{\beta\in D_i}[m_\beta,m_{\beta^+})\cap Y^\alpha\in\mathcal U_\alpha.
\]
Finally, define
\[
  X=\bigcup_{\alpha\in D_i}X^\alpha.
\]
By \eqref{eq:distance} it is clear that
\[
  \tag{$\dag$}\label{eq:pseudoequality}
 X\cap(\pi^j_{\delta\beta})^{-1}[m_\alpha,m_{\alpha^+})=
 X_{\ord{\alpha}}\cap(\pi^j_{\delta\beta})^{-1}[m_\alpha,m_{\alpha^+})
\]
for each \(\alpha\in D_i\), \(\beta<\alpha\), and \(j<2\).
It is clear that \(X\) is a pseudointersection of the \(X_n\)s. We need to show
that \(X\in \mathbb P(\bar{\mathcal U},\Pi,\Pi_0,\Pi_1)\), i.e. that for each
\(\alpha<\delta\) we have
\[
  \tag{$\ddag$}\label{eq:leq}
  \pi_{\delta\alpha}^1\prec_{X,\mathcal U_\alpha}\pi_{\delta\alpha}^0.
\]
First assume \(\alpha\in D^\prime\). We show that \(Z^\alpha\setminus m_\alpha\)
witnesses \eqref{eq:leq}. Let \(m\in Z^\alpha\setminus m_\alpha\) be arbitrary. Find
\(\alpha^\prime\in D_i\) so that \(m\in[m_{\alpha^\prime},m_{\alpha^{\prime+}})\).
Then \(\alpha<\alpha^\prime\) so, in particular, we have
\[
  \big|\pre{(\pi_{\delta\alpha}^1)}{m}\cap X_{\#\alpha^\prime}\big|\geq
  s(m)\cdot\big|\pre{(\pi_{\delta\alpha}^0)}{m}\cap X_{\#\alpha^\prime}\big|
\]
This, together with \eqref{eq:pseudoequality}, shows \eqref{eq:leq}. Finally notice that
if \(\alpha<\beta<\alpha^\prime\) and
\[
  \pi_{\delta\alpha}^1\prec_{X,\mathcal U_\alpha}\pi_{\delta\alpha}^0\ \&\
  \pi_{\delta\alpha^\prime}^1\prec_{X,\mathcal U_{\alpha^\prime}}\pi_{\delta\alpha^\prime}^0
\]
then also
\[
 \pi_{\delta\beta}^1\prec_{X,\mathcal U_\beta}\pi_{\delta\beta}^0.
\]
Since \(D^\prime\) was cofinal in \(\delta\) this finishes the proof of \eqref{eq:leq}
for all \(\alpha<\delta\).
\end{proof}

\begin{proposition}
If  \(A\subseteq\omega\) then the set
\[
  D_A=\big\{X\in \mathbb P(\bar{\mathcal U},\Pi,\Pi_0,\Pi_1):X\subseteq A\vee X\subseteq\omega\setminus A\big\}
\]
is dense.
\end{proposition}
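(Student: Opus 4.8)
The plan is to produce, for a given condition $X\in\mathbb P(\bar{\mathcal U},\Pi,\Pi_0,\Pi_1)$ and a given $A\subseteq\omega$, a monochromatic refinement $Z\subseteq X$ (so $Z\subseteq A$ or $Z\subseteq\omega\setminus A$) that is still a condition; since $Z\subseteq X$ already gives $Z\subseteq^* X$, such a $Z$ witnesses density. Fix a cofinal $D\subseteq\delta$ of order type $\omega$, and for each $\alpha\in D$ fix $Y_\alpha\in\mathcal U_\alpha$ and $s_\alpha$ tending to infinity witnessing $\pi^1_{\delta\alpha}\prec_{X,\mathcal U_\alpha}\pi^0_{\delta\alpha}$. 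The useful asymmetry is that the $\Pi_0$-side only helps us under refinement: for any $Z\subseteq X$ one has $|\pre{(\pi^0_{\delta\alpha})}{n}\cap Z|\le|\pre{(\pi^0_{\delta\alpha})}{n}\cap X|$, so to witness the relation for $Z$ it suffices to keep each relevant $\pi^1_{\delta\alpha}$-fibre large as measured against the \emph{original} $\Pi_0$-fibres of $X$.

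First I would make a per-level choice. For $\alpha\in D$ and $n\in Y_\alpha$, split $\pre{(\pi^1_{\delta\alpha})}{n}\cap X$ into its part inside $A$ and its part outside $A$; one of the two contains at least half the elements, and since $\mathcal U_\alpha$ is an ultrafilter there are a colour $c(\alpha)\in 2$ and a set $W_\alpha\in\mathcal U_\alpha$, $W_\alpha\subseteq Y_\alpha$, on which the $c(\alpha)$-coloured part is the larger half. Pigeonholing $c$ over the countable set $D$ yields a cofinal $D'\subseteq D$ on which the colour is constant, say equal to $A$; set $Z=X\cap A$. For $\alpha\in D'$ and $n\in W_\alpha$ we then get
\[
  |\pre{(\pi^1_{\delta\alpha})}{n}\cap Z|\ge\tfrac12|\pre{(\pi^1_{\delta\alpha})}{n}\cap X|>\tfrac12 s_\alpha(n)\,|\pre{(\pi^0_{\delta\alpha})}{n}\cap X|\ge\tfrac12 s_\alpha(n)\,|\pre{(\pi^0_{\delta\alpha})}{n}\cap Z|,
\]
so $W_\alpha$ together with $\tfrac12 s_\alpha$ witnesses $\pi^1_{\delta\alpha}\prec_{Z,\mathcal U_\alpha}\pi^0_{\delta\alpha}$ for every $\alpha\in D'$. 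Note that $Z$ is infinite (the chosen halves are nonempty for all large $n$ and they sit in pairwise disjoint fibres), that $Z\subseteq A$, and that $Z\subseteq X$.

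It remains to promote the relation from the cofinal set $D'$ to \emph{all} $\alpha<\delta$, and this is where the real work lies. I would invoke the interpolation property established inside the proof of \autoref{closed}: if $\pi^1_{\delta\alpha}\prec_{Z,\mathcal U_\alpha}\pi^0_{\delta\alpha}$ and $\pi^1_{\delta\alpha'}\prec_{Z,\mathcal U_{\alpha'}}\pi^0_{\delta\alpha'}$ with $\alpha<\beta<\alpha'$, then $\pi^1_{\delta\beta}\prec_{Z,\mathcal U_\beta}\pi^0_{\delta\beta}$. Equivalently, the set of levels at which a \emph{fixed} colour dominates is convex; being also cofinal for the winning colour, it is a final segment of $\delta$, which at once covers every $\beta>\min D'$. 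The step I expect to be the main obstacle is exactly securing the relation on the remaining initial segment $[0,\min D')$ for the \emph{same} monochromatic set: one cannot in general arrange that a single colour is the majority colour at every level simultaneously, so these low levels cannot simply be handled by a further colour choice. I would instead try to force them into line by exploiting the $\Pi_0$-slack noted above — shrinking $Z$ within the chosen colour so as to thin out the $\Pi_0$-fibres at the offending low levels — while verifying that this thinning does not destroy the $\pi^1_{\delta\alpha}$-domination already secured at the cofinal levels (here the disjointness of the blocks governed by $\ord{\alpha}$, as in \autoref{closed}, is what keeps the two regimes from interfering). Once the relation holds at every level, $Z$ is a condition below $X$ lying in $D_A$, which proves density.
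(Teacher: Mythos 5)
Your core argument --- the fibrewise majority/halving step and the pigeonhole giving one colour at cofinally many levels --- is exactly the paper's proof. The paper makes the same two observations: for each $\alpha$, one of $X\cap A$, $X\setminus A$ keeps at least half of each $\pi^1_{\delta\alpha}$-fibre of $X$ on a $\mathcal U_\alpha$-large set, so that colour satisfies the relation with witness $s_\alpha/2$ (halving preserves ``tends to infinity''), and one of the two colours must occur for cofinally many $\alpha<\delta$. At that point the paper concludes in one sentence that ``the result then immediately follows'', the implicit justification being the same interpolation observation that closes the proof of \autoref{closed} --- precisely the fact you invoke.

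Where you and the paper part ways is in the treatment of that last step, and here your diagnosis is sharper than your remedy. You are right that the interpolation statement recorded in \autoref{closed} requires members of the cofinal set on \emph{both} sides of $\beta$, so by itself it only yields the final segment $[\min D',\delta)$; and the relation genuinely does not descend from a single higher level, since a $\pi_{\alpha'\beta}$-fibre may contain, besides the $\mathcal U_{\alpha'}$-large set of ``good'' $\alpha'$-indices, bad indices carrying most of the $\pi^0$-mass, and the set of $\beta$-indices so poisoned can be $\mathcal U_\beta$-large. But your proposed repair of the initial segment is only a statement of intent: you do not show that the thinning of $Z$ inside the chosen colour can be carried out, that it leaves $Z$ infinite, or that it is compatible with the domination already secured at the cofinal levels; and the appeal to ``disjointness of blocks'' from \autoref{closed} has no counterpart here, since $X\cap A$ inherits no block structure. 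So your write-up reproduces everything the paper actually writes down and then stalls exactly at the step the paper declares immediate; the relation for the monochromatic set at the levels below $\min D'$ remains an unfilled gap in your proposal, even though it is a gap that the paper's own one-line conclusion does nothing to fill either.
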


\begin{proof}
Notice that if
 \[
 \pi_{\delta\alpha}^1\prec_{X,\mathcal U_\alpha}\pi_{\delta\alpha}^0,
\]
then either
\[
 \pi_{\delta\alpha}^1\prec_{X\cap A,\mathcal U_\alpha}\pi_{\delta\alpha}^0,
\]
or
\[
 \pi_{\delta\alpha}^1\prec_{X\setminus A,\mathcal U_\alpha}\pi_{\delta\alpha}^0.
\]
This follows from the fact that either
\[
  |\pre{(\pi^1_{\delta\alpha})}{m}\cap X\cap A|\geq
  \frac{|\pre{(\pi^1_{\delta\alpha})}{m}\cap X|}{2}
\]
or
\[
  |\pre{(\pi^1_{\delta\alpha})}{m}\cap X\setminus A|\geq
  \frac{|\pre{(\pi^1_{\delta\alpha})}{m}\cap X|}{2}
\]
for \(\mathcal U_\alpha\)-many \(m\)s and that if \(s\) tends to infinity then
so does \(s/2\). The result then immediately follows because one of the two
cases has to happen for cofinally many \(\alpha<\delta\).
\end{proof}

\begin{proposition}
If \(f:\Pi_0\to_{\bar{\mathcal U}}\Pi_1\) is  \label{kill}
finite-to-one, then the set
\[
  D_f=\big\{X\in \mathbb P(\bar{\mathcal U},\Pi,\Pi_0,\Pi_1):
        (\exists Y\in\mathcal U_0)(X\cap f[(\pi_{\delta0}^0)^{-1}[Y]]=\emptyset\big\}
\]
is dense.
\end{proposition}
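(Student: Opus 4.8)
The plan is to show that $D_f$ is dense by taking an arbitrary condition $X \in \mathbb{P}(\bar{\mathcal U},\Pi,\Pi_0,\Pi_1)$ and producing $X' \subseteq^* X$ that lies in $D_f$. The intuitive content is that $f$, going from $\Pi_0$ to $\Pi_1$, must be ``injective enough'' on each fiber to respect the growth rate built into the $\prec$ relation, and this forces $f$ to spread mass out too thinly to land its image inside any single large target set $f[(\pi_{\delta 0}^0)^{-1}[Y]]$; the point of the density argument is that we can always find such a witnessing $Y \in \mathcal{U}_0$ and then thin $X$ to avoid the image.

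\medskip

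First I would unpack what membership in $D_f$ requires: we need $X' \in \mathbb{P}$ together with some $Y \in \mathcal{U}_0$ such that $X' \cap f\big[(\pi_{\delta 0}^0)^{-1}[Y]\big] = \emptyset$. So I would analyze the set $S = f\big[(\pi_{\delta 0}^0)^{-1}[Y]\big]$ as $Y$ ranges over $\mathcal{U}_0$, and the complementary picture on $X$. Using $f : \Pi_0 \to_{\bar{\mathcal U}}\Pi_1$ at level $\alpha = 0$, for $\mathcal{U}_0$-large sets of points $n$ we have $\pi^0_{\delta 0}(n) = \pi^1_{\delta 0}(f(n))$, so $f$ carries $\pi^0$-fibers into $\pi^1$-fibers. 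Since $f$ is finite-to-one, the image $S$ meets each $\pi^1_{\delta 0}$-fiber in a set whose size is controlled by the fiber sizes of $f$. The key combinatorial estimate I would try to extract is that, because the condition $X$ satisfies $\pi^1_{\delta 0}\prec_{X,\mathcal U_0}\pi^0_{\delta 0}$ — so $|(\pi^1_{\delta 0})^{-1}(m)\cap X|$ exceeds $|(\pi^0_{\delta 0})^{-1}(m)\cap X|$ by an unbounded factor $s(m)$ — there is, on $\mathcal{U}_0$-many fibers, ``room'' inside $(\pi^1_{\delta 0})^{-1}(m) \cap X$ to avoid the comparatively small intersection of $S$ with that fiber. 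Concretely, I would aim to choose, fiber by fiber over $m$ in a suitable $Y \in \mathcal{U}_0$, a subset of $(\pi^1_{\delta 0})^{-1}(m)\cap X$ missing $S$ while still keeping a $\prec_{X',\mathcal U_0}$-sized majority over $(\pi^0_{\delta 0})^{-1}(m)\cap X$, so that the resulting $X'$ stays in $\mathbb{P}$ at level $0$.

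\medskip

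After securing level $0$, I would need to verify that $X'$ remains a condition at \emph{all} levels $\alpha < \delta$, i.e. that $\pi^1_{\delta\alpha}\prec_{X',\mathcal U_\alpha}\pi^0_{\delta\alpha}$ survives the thinning. Here I would lean on the commutativity of $\Pi \cup \Pi_1$ (and of $\Pi\cup\Pi_0$) with respect to $\bar{\mathcal U}$: since the $\pi^i_{\delta\alpha}$ factor through $\pi^i_{\delta 0}$ via the $\pi_{\alpha 0}$ on $\mathcal{U}_\alpha$-large sets, a $\prec_{X',\mathcal U_0}$-witness at level $0$ should push up to witnesses at higher $\alpha$, exactly as in the closing paragraph of the proof of Proposition \ref{closed}, where a witness at two cofinal levels interpolates to the intermediate ones. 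The cleanest route is probably to perform the deletion only on level-$0$ fibers and argue that removing a controlled fraction of each fiber degrades the $s(m)$ factor by at most a bounded multiplicative amount, and that a function tending to infinity divided by a constant still tends to infinity — the same trick already used in the preceding density proof for $D_A$.

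\medskip

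The main obstacle I expect is the fiber-wise counting at level $0$: one must verify that $|S \cap (\pi^1_{\delta 0})^{-1}(m)|$ is genuinely small relative to $|(\pi^1_{\delta 0})^{-1}(m)\cap X|$ for $\mathcal{U}_0$-many $m$, so that deleting $S$ leaves enough of each fiber to preserve the strict growth inequality defining $\prec_{X',\mathcal U_0}$. This hinges on choosing the right $Y \in \mathcal{U}_0$: one wants $(\pi^0_{\delta 0})^{-1}[Y]$ small enough that its $f$-image cannot overwhelm the much larger $\pi^1$-fibers, which is precisely where the defining factor $|(\pi^1_{\delta\alpha})^{-1}(n)| > n \cdot |(\pi^0_{\delta\alpha})^{-1}(n)|$ from $\Pi_1 \prec \Pi_0$ and the unbounded $s$ from $\prec_{X,\mathcal U_0}$ do the work. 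If the naive count is too crude, I would instead pick $Y$ with $\mathcal{U}_0$-small enough $(\pi^0_{\delta 0})$-preimage density and exploit finite-to-oneness of $f$ to bound the image size, rather than trying to delete $S$ outright.
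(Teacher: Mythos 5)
Your level-$0$ counting is essentially the right one---it is exactly the estimate the paper uses---but your plan for handling the levels \(0<\alpha<\delta\) contains the real gap. You propose to arrange the deletion so as to preserve \(\pi^1_{\delta 0}\prec_{X',\mathcal U_0}\pi^0_{\delta 0}\) only, and then to ``push up'' this witness to all higher levels by commutativity, citing the interpolation at the end of the proof of Proposition \ref{closed}. That interpolation goes the other way: it produces a witness at an intermediate level \(\beta\) from witnesses at levels \(\alpha<\beta<\alpha'\), and the essential input is the witness at the level \(\alpha'\) \emph{above} \(\beta\) (which is why that proof first secures the relation on a cofinal set \(D'\) and only then fills in the intermediate levels). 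Your factorization claim is also reversed: on large sets \(\pi^i_{\delta 0}=\pi_{\alpha 0}\circ\pi^i_{\delta\alpha}\), so the level-\(\alpha\) fibers \emph{refine} the level-\(0\) fibers. Consequently the level-\(0\) inequality \(|(\pi^1_{\delta 0})^{-1}(m)\cap X'|\geq s(m)\cdot|(\pi^0_{\delta 0})^{-1}(m)\cap X'|\) is only an aggregate statement summed over all level-\(\alpha\) fibers sitting inside the \(m\)-th level-\(0\) fiber; it can survive even if the deletion completely empties some of those finer fibers, so it gives no control of \(\pi^1_{\delta\alpha}\prec_{X',\mathcal U_\alpha}\pi^0_{\delta\alpha}\) for \(\alpha>0\). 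A deletion tailored only to level \(0\) can therefore throw \(X'\) out of the forcing, and nothing in your sketch prevents this.

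The paper's proof does no propagation at all: it verifies the condition at every level simultaneously. The point you are not using is that the definition of \(f:\Pi_0\to_{\bar{\mathcal U}}\Pi_1\) gives the commutation \(\pi^1_{\delta\alpha}(f(k))=\pi^0_{\delta\alpha}(k)\) on a large set at \emph{every} level \(\alpha\), not just at \(0\). Concretely: for each \(\alpha<\delta\) fix \(Y_\alpha\in\mathcal U_\alpha\) and \(s_\alpha\) witnessing \(\pi^1_{\delta\alpha}\prec_{X,\mathcal U_\alpha}\pi^0_{\delta\alpha}\), arranged also to witness these commutations; use the P-point property of \(\mathcal U_0\) to find a single \(Y\in\mathcal U_0\) with \(\pi_{\alpha 0}[Y_\alpha\setminus n_\alpha]\subseteq Y\) for all \(\alpha\); put \(Z=(\pi^0_{\delta 0})^{-1}[Y]\) and \(X'=\big((\pi^1_{\delta 0})^{-1}[Y]\cap X\big)\setminus f[Z\cap X]\). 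The one key estimate, valid at every level, is the inclusion \(f^{-1}\big[(\pi^1_{\delta\alpha})^{-1}(n)\big]\cap X\subseteq(\pi^0_{\delta\alpha})^{-1}(n)\cap X\) for \(n\in Y_\alpha\setminus n_\alpha\): it shows the deleted set meets each \(\pi^1_{\delta\alpha}\)-fiber of \(X\) in at most \(|(\pi^0_{\delta\alpha})^{-1}(n)\cap X|\) points, so the factor drops only additively, from \(s_\alpha(n)\) to \(s_\alpha(n)-1\), which still tends to infinity. Note two further discrepancies with your sketch: the loss per fiber is additive (one \(\pi^0\)-fiber trace), not the ``bounded multiplicative fraction'' you aim for; and the global hypothesis \(\Pi_1\prec\Pi_0\) that you invoke plays no role in this argument---only the \(X\)-relative relations \(\prec_{X,\mathcal U_\alpha}\) and the level-wise commutation of \(f\) are used.
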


\begin{proof}
Let  \(X\in\mathbb P(\bar{\mathcal U},\Pi,\Pi_0,\Pi_1)\). For each
\(\alpha<\delta\) choose \(Y_\alpha\in\mathcal U_\alpha\) and
\(s_\alpha:\omega\to\omega\) tending to infinity witnessing
\(\pi^1_{\delta\alpha}\prec_{X,\mathcal U_\alpha}\pi^0_{\delta\alpha}\).
We may also assume that \(\pi^1_{\delta\alpha}(f(k))=\pi^0_{\delta\alpha}(k)\)
for all \(k\in(\pi^0_{\delta\alpha})^{-1}[Y_\alpha]\),
\(\pi_{\alpha0}(\pi^1_{\delta\alpha}(k))=\pi^1_{\delta0}(k)\) for each
\(k\in(\pi^1_{\delta\alpha})^{-1}[Y_\alpha]\) and \(\alpha<\delta\); and
\(Y_\alpha\subseteq(\pi_{\alpha0})^{-1}[Y_0]\). Since \(\mathcal U_0\)
is a P-point, there is a \(Y\in\mathcal U_0\) which is a pseudointersection
of \(\pi_{\alpha0}[Y_\alpha]\) and let \(n_\alpha<\omega\) be such that
\(\pi_{\alpha0}[Y_\alpha\setminus n_\alpha]\subseteq Y\). Also write
\(Z=(\pi^0_{\delta0})^{-1}[Y]\).
Let
\[
  X^\prime=\big((\pi^1_{\delta0})^{-1}[Y]\cap X\big)\setminus f[Z\cap X]
\]
Notice that for each \(n\in Y_\alpha\setminus n_\alpha\) we have
\[
  f^{-1}\big[\pre{(\pi_{\delta\alpha}^1)}{n}\big]\cap X
  \subseteq\pre{(\pi_{\delta\alpha}^0)}{n}\cap X
\]
(since \(f,\pi_{\delta\alpha}^0,\pi_{\delta\alpha}^1\) commute on
\(Y_\alpha\setminus n_\alpha\))
so that
\[
  \big|\pre{(\pi_{\delta\alpha}^1)}{n}\cap X\cap f[Z\cap X]\big|\leq\big|\pre{(\pi_{\delta\alpha}^0)}{n}\cap X\big|.
\]
By the choice of \(Y_\alpha\) we also have
\[
  \big|\pre{(\pi_{\delta\alpha}^1)}{n}\cap X\big|\geq s_\alpha(n)\cdot\big|\pre{(\pi_{\delta\alpha}^0)}{n}\cap X\big|
\]
Putting this together gives:
\[
  \big|\pre{(\pi_{\delta\alpha}^1)}{n}\cap X^\prime\big|\geq (s_\alpha(n)-1)\cdot\big|\pre{(\pi_{\delta\alpha}^0)}{n}\cap X^\prime\big|
\]
Since \(s_\alpha\) tends to infinity so does \(s_\alpha-1\). This
shows that \(s_\alpha-1\) and \(Y_\alpha\setminus n_\alpha\) witness the
fact that \(X^\prime\in {\mathbb P}(\bar{\mathcal U},\Pi, \Pi_0,\Pi_1)\).
\end{proof}

We now put the previous propositions together and prove:

\begin{theorem}\label{thm:unbounded}
Assume \(\Diamond\). There is a sequence of ultrafilters of length $\omega_1$, which is strictly increasing in the RK-order and has no upper bound which would be a P-point.
\end{theorem}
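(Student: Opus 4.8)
The plan is to construct the chain $\langle \mathcal{U}_\alpha : \alpha < \omega_1\rangle$ by recursion on $\alpha$, simultaneously building a commuting family of finite-to-one maps $\Pi = \langle \pi_{\alpha\beta} : \beta \le \alpha < \omega_1 \rangle$ witnessing $\mathcal{U}_\beta \le_{RK} \mathcal{U}_\alpha$. The successor steps are routine: given $\mathcal{U}_\alpha$ one produces a P-point $\mathcal{U}_{\alpha+1}$ strictly RK-above it with a projection $\pi_{\alpha+1,\alpha}$ having unbounded fibres (a standard construction under CH), and extends the commuting family by composition. All of the content is at limit stages $\delta$, where I would build $\mathcal{U}_\delta$ as an RK-upper bound of $\bar{\mathcal{U}}\restriction\delta$ using the forcing $\mathbb{P}(\bar{\mathcal{U}}\restriction\delta, \Pi, \Pi_0, \Pi_1)$. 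The two auxiliary families with $\Pi_1 \prec \Pi_0$ are obtained from a single candidate family of limit projections by refining once via \autoref{refine_maps}; the members of $\Pi_1$ will become the new projections $\pi_{\delta\alpha}$, while $\Pi_0$ plays the role of a ``threatening'' comparison family at which the diagonalisation is aimed.

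At the limit stage $\delta$ I would manufacture $\mathcal{U}_\delta$ as the ultrafilter induced by a sufficiently generic filter on $\mathbb{P} = \mathbb{P}(\bar{\mathcal{U}}\restriction\delta, \Pi, \Pi_0, \Pi_1)$. Since $\mathbb{P}$ is $\sigma$-closed (\autoref{closed}) and CH holds, one can enumerate the $\cc = \omega_1$ many dense sets $D_A$ ($A \subseteq \omega$) and meet them along a $\subseteq^*$-decreasing $\omega_1$-tower of conditions, taking pseudo-intersections at countable limits. Meeting every $D_A$ makes $\mathcal{U}_\delta = \{A : (\exists \xi)\, X_\xi \subseteq^* A\}$ an ultrafilter, and since it is generated by a $\subseteq^*$-tower it is automatically a P-point: any countable subfamily is refined by the single condition $X_\xi$ whose index is the supremum of the relevant indices. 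The defining inequalities $\pi^1_{\delta\alpha} \prec_{X,\mathcal{U}_\alpha} \pi^0_{\delta\alpha}$ guarantee that $\pi^1_{\delta\alpha}[X] \in \mathcal{U}_\alpha$ for every condition $X$, so $\pi^1_{\delta\alpha}$ witnesses $\mathcal{U}_\alpha \le_{RK} \mathcal{U}_\delta$; setting $\pi_{\delta\alpha} = \pi^1_{\delta\alpha}$ keeps the family commuting. Because the fibres of $\pi^1_{\delta\alpha}$ are unbounded, $\mathcal{U}_\delta >_{RK} \mathcal{U}_\alpha$ strictly, so the chain is strictly increasing.

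The heart of the argument, and the main obstacle, is arranging that no P-point lies RK-above the whole chain; this is where $\Diamond$ and \autoref{kill} enter. I would use the $\Diamond$-sequence to guess, at each limit $\delta$, a coherent system of finite-to-one reductions of $\bar{\mathcal{U}}\restriction\delta$ through a common upper ultrafilter, letting this guess determine both the comparison family $\Pi_0$ and a finite-to-one candidate $f_\delta : \Pi_0 \to_{\bar{\mathcal{U}}} \Pi_1$; during the construction I would ensure that $\mathcal{U}_\delta$ meets the corresponding dense set $D_{f_\delta}$ provided by \autoref{kill}. Now suppose toward a contradiction that $\mathcal{W}$ is a P-point with $\mathcal{U}_\alpha \le_{RK} \mathcal{W}$ for all $\alpha < \omega_1$. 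Since $\mathcal{W}$ is a P-point, each reduction may be taken finite-to-one, and the reductions may be arranged to commute with $\Pi$; hence they constitute exactly the kind of coherent system that $\Diamond$ anticipates correctly on a club of $\delta < \omega_1$. At such a $\delta$ the guessed $f_\delta$ is a genuine finite-to-one map $\Pi_0 \to_{\bar{\mathcal{U}}} \Pi_1$ extracted from $\mathcal{W}$, and meeting $D_{f_\delta}$ yields $X \in \mathcal{U}_\delta$ and $Y \in \mathcal{U}_0$ with $X \cap f_\delta\big[(\pi^0_{\delta 0})^{-1}[Y]\big] = \emptyset$; this says that $\mathcal{U}_\delta$ escapes the range through which $\mathcal{W}$ is forced to factor, contradicting $\mathcal{U}_\delta \le_{RK} \mathcal{W}$. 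The delicate point, where essentially all the real work lies, is the bookkeeping that converts a hypothetical P-point upper bound into a system that $\Diamond$ can guess and that yields an $f_\delta$ to which \autoref{kill} applies; the gap $\Pi_1 \prec \Pi_0$ is precisely what makes each $D_{f_\delta}$ dense, and hence what lets a mere P-point upper bound (rather than a rapid one) be defeated.
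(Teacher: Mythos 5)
Your plan follows the paper's architecture closely: the same forcing \(\mathbb P(\bar{\mathcal U},\Pi,\Pi_0,\Pi_1)\), the same use of \autoref{closed} and the dense sets \(D_A\) to build a P-point upper bound along an \(\omega_1\)-tower at limit stages, and the same idea of using \(\Diamond\) to anticipate the witnessing system of a hypothetical P-point upper bound and \autoref{kill} to defeat it. But there is a genuine gap in your anti-upper-bound mechanism. You have \(\Diamond\) guess, at stage \(\delta\), not only the comparison family \(\Pi_0\) but also a single map \(f_\delta\), and you meet only the one dense set \(D_{f_\delta}\). For your final contradiction, \(f_\delta\) must essentially be the reduction \(\pi_{\omega_1\delta}\) witnessing \(\mathcal U_\delta\leq_{RK}\mathcal W\); that is exactly what makes \(f_\delta\big[\pi_{\omega_1 0}^{-1}[Y]\big]\) a \(\mathcal U_\delta\)-large set disjoint from \(X\in\mathcal U_\delta\). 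However, \(\pi_{\omega_1\delta}\) is the \(\delta\)-th coordinate of the witnessing sequence, not part of the initial segment \(\Pi^{\omega_1}\restrict\delta\). A \(\Diamond\)-sequence guesses initial segments \(S\restrict\delta\), never the value \(S(\delta)\), and no guessing principle can do the latter: given any candidate guessing sequence \(\langle t_\delta:\delta<\omega_1\rangle\) one simply defines \(S\) so that \(S(\delta)\neq t_\delta(\delta)\) for every \(\delta\), and the guess is never correct. Nor can \(\pi_{\omega_1\delta}\) be ``extracted'' or reconstructed from the guessed segment during stage \(\delta\): it is a reduction onto \(\mathcal U_\delta\), an ultrafilter that is only being created at that very stage. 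So there is no stationary set of stages at which your single dense set \(D_{f_\delta}\) is guaranteed to be the relevant one, and the contradiction never gets off the ground.

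The repair is precisely what the paper does, and it is the one point where your proposal diverges from it: at a limit stage, after fixing the guessed \(\Pi_0\) and the refinement \(\Pi_1\prec\Pi_0\) from \autoref{refine_maps}, meet \(D_f\) for \emph{every} finite-to-one \(f:\Pi_0\to_{\bar{\mathcal U}}\Pi_1\) simultaneously, alongside all the \(D_A\). Since \(\Diamond\) implies CH, there are only \(\omega_1\) many such maps \(f\), and the \(\sigma\)-closure of the forcing lets the \(\omega_1\)-tower run through all of these dense sets. Then \(\Diamond\) is used only for what it can actually deliver---guessing \(\Pi^{\omega_1}\restrict\delta\)---and whatever the true witness \(\pi_{\omega_1\delta}\) later turns out to be, the commutation assumption makes it \emph{some} finite-to-one map \(\Pi_0\to_{\bar{\mathcal U}}\Pi_1\), whose dense set has therefore already been met. (A separate, minor point: unbounded fibres of \(\pi^1_{\delta\alpha}\) do not by themselves give strictness of \(\mathcal U_\alpha<_{RK}\mathcal U_\delta\), since strictness concerns all possible reducing maps, not the chosen witness; strictness at limit stages instead follows from strictness at successor stages, because \(\mathcal U_\delta\equiv_{RK}\mathcal U_\alpha\) would collapse \(\mathcal U_{\alpha+1}\) and \(\mathcal U_\alpha\).)
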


\begin{proof}
Let  \(\langle \Pi^\alpha:\alpha<\omega_1\rangle\), where \(\Pi^\alpha=\langle p^\alpha_{\omega_1\gamma}:\gamma<\alpha\rangle\), be a diamond sequence
guessing sequences of functions from \(\omega\) to \(\omega\), i.e. such that for
every sequence of such functions \(\Pi=\langle\pi_{\omega_1\alpha}:\alpha<\omega_1\rangle\) the set
\[
  \big\{\alpha\in Lim(\omega_1):\Pi\upharpoonright\alpha=\Pi^\alpha\big\}
\]
is stationary. We recursively construct an RK-increasing sequence
\(\langle \mathcal U_\alpha:\alpha<\omega_1\rangle\) of P-points and
witnessing maps \(\Pi_1^\alpha=\langle\pi^1_{\alpha\beta}:\beta<\alpha\rangle\) as follows.

At a successor step \(\alpha+1\), we just construct an arbitrary P-point \(\mathcal U_{\alpha+1}\) above
\(\mathcal U_\alpha\) and let \(\Pi^{\alpha+1}_1\) be the appropriate witnessing maps.

At a limit step \(\alpha\) let \(\Pi=\langle\pi^1_{\beta\gamma}:\gamma\leq\beta<\alpha\rangle\),
\(\bar{\mathcal U}=\langle\mathcal U_\beta:\beta<\alpha\rangle\), and write
\(\Pi^\alpha=\Pi^\alpha_0=\{\pi^0_{\alpha\beta}:\beta<\alpha\}\) (i.e. \(\pi^0_{\alpha\beta}=p^\alpha_{\omega_1\beta}\)).
If \(\Pi\cup\Pi^\alpha_0\) do not commute w.r.t. \(\bar{\mathcal U}\), we construct \(\mathcal U_\alpha\) to
be an arbitrary P-point above \(\bar{\mathcal U}\) and let \(\Pi^\alpha_1\) be appropriate finite-to-one
witnessing maps. Otherwise, we use \autoref{refine_maps} to construct \(\Pi^{\alpha}_1=\{\pi^1_{\alpha\beta}:\beta<\alpha\}\) satisfying

\begin{enumerate}[series=condition6]
  \item \label{cond:below} \(\Pi^\alpha_1\prec\Pi^\alpha_0\); and
\end{enumerate}

and then we recursively construct a P-filter \(\mathcal U_\alpha\) on \({\mathbb P}(\bar{\mathcal U},\Pi,\Pi^\alpha_0,\Pi^\alpha_1)\) so that

\begin{enumerate}[resume=condition6]
  \item \label{cond:disjoint} for all finite-to-one \(f:\Pi^\alpha_0\to_{\bar{\mathcal U}}\Pi^{\alpha}_1\)
  there is \(X\in\mathcal U_\alpha\) and \(Y\in\mathcal U_0\) such that
  \[
    \emptyset =
    f\big[(\pi_{\alpha0}^0)^{-1}[Y]\big]\cap X.
  \]
\end{enumerate}

To guarantee \eqref{cond:below} we just need to ensure that it hits each of the \(\omega_1\)-many
dense sets \(\{D_f:f:\Pi^\alpha_0\to_{\bar{\mathcal U}}\Pi^{\alpha}_1\}\); we also make sure that it
hits the dense sets \(\{D_A:A\subseteq\omega\}\) so that it is an ultrafilter. This can be done since
the forcing is \(\sigma\)-closed by \autoref{closed}. This finishes the recursive construction.

 Finally notice that the chain of P-points thus constructed cannot
have a P-point on top. Otherwise suppose \(\mathcal U\) is RK-above the chain as witnessed by finite-to-one
maps \(\Pi^{\omega_1}=\{\pi_{\omega_1\alpha}:\alpha<\omega_1\}\) which commute with $\bigcup_{\alpha<\omega_1} \Pi^\alpha_1$. Since the \(\Pi^\alpha\)s formed a diamond sequence, there is a limit \(\alpha<\omega_1\) such
that \(\Pi^{\omega_1}\upharpoonright\alpha=\Pi^\alpha\). Then \(\Pi^\alpha=\Pi^\alpha_0\) commutes with \(\Pi\)
so we can apply \eqref{cond:disjoint} to \(f=\pi_{\omega_1\alpha}\) and conclude that there is
\(X\in\mathcal U_\alpha\) and \(Y\in\mathcal U_0\) such that
\[
  \emptyset =
  \pi_{\omega_1\alpha}\left[(\pi^0_{\alpha0})^{-1}[Y]\right]\cap X =
  \pi_{\omega_1\alpha}\left[(p^\alpha_{\omega_10})^{-1}[Y]\right]\cap X =
  \pi_{\omega_1\alpha}\left[\pi_{\omega_10}^{-1}[Y]\right]\cap X
\]
contradicting the fact that \(\pi_{\omega_1\alpha}\) witnesses that \(\mathcal U\) is above \(\mathcal U_\alpha\).
\end{proof}
\section{Concluding Remarks} \label{sec:concluding}
It was proved in Section \ref{sec:noshort} that given a Rudin-Keisler increasing chain of rapid P-points $\seq{\UUU}{\alpha}{<}{{\omega}_{1}}$ together with a commuting sequence of finite-to-one witnessing maps $\langle {\pi}_{\beta, \alpha}: \alpha \leq \beta < {\omega}_{1} \rangle$, it is possible to find a sequence of finite-to-one maps $\seq{g}{\alpha}{<}{{\omega}_{1}}$ together with a rapid P-point $\VVV$ such that ${g}_{\alpha}$ is a witness to ${\UUU}_{\alpha} \: {\leq}_{\mathrm{RK}} \: \VVV$.
However the argument in Section \ref{sec:noshort} does not guarantee that any of the ${g}_{\alpha}$ will be nondecreasing even when it is given that each of the maps ${\pi}_{\beta\alpha}$ is nondecreasing.
In other words, the rapid P-point $\VVV$ may not be an ${\leq}^{+}_{\mathrm{RB}}$ upper bound of the sequence $\seq{\UUU}{\alpha}{<}{{\omega}_{1}}$ even if that sequence itself is assumed to be ${\leq}^{+}_{\mathrm{RB}}$-increasing.

It appears that one must fall back on the construction given in \cite{Raghavan:2016} if one wants a chain of P-points of length ${\cc}^{+}$ which is increasing in the ${\leq}^{+}_{\mathrm{RB}}$ ordering.
Nevertheless the ideas from Section \ref{sec:noshort} can be combined with the work in \cite{Raghavan:2016} to show that $\CH$ implies that the rapid P-points are ${\cc}^{+}$-closed with respect to ${\leq}^{+}_{\mathrm{RB}}$.
More precisely, the following theorem will appear in a forthcoming paper of Kuzeljevi{\'c}, Raghavan, and Verner~\cite{krv}: Assume the Continuum Hypothesis.
Suppose $\delta < {\cc}^{+}$.
If $\seq{\UUU}{\alpha}{<}{\delta}$ is any sequence of rapid P-points which is increasing with respect to ${\leq}^{+}_{\mathrm{RB}}$, then there exists a rapid P-point $\VVV$ such that $\forall \alpha < \delta\left[{\UUU}_{\alpha} \: {\leq}^{+}_{\mathrm{RB}} \: \VVV\right]$.
Therefore every strictly increasing sequence of rapid P-points of length $< {\cc}^{+}$ can be extended to one of length ${\cc}^{+}$ with respect to the ${\leq}^{+}_{\mathrm{RB}}$ ordering.
\bibliographystyle{amsplain}
\bibliography{main}
\end{document}